\newtheorem{theorem}{Theorem}[section]
\newtheorem{lemma}{Lemma}[section]
\newtheorem{corollary}{Corollary}[section]
\newtheorem{proposition}{Proposition}[section]
\newtheorem{conjecture}{Conjecture}[section]
\newtheorem{remark}{Remark}[section]
\DeclareMathOperator{\inter}{int}
\def\lin{\mathop\mathrm{lin}\nolimits}
\def\aff{\mathop\mathrm{aff}\nolimits}
\def\conv{\mathop\mathrm{conv}\nolimits}
\def\bd{\mathop\mathrm{bd}\nolimits}
\def\K{\mathcal{K}}
\def\r{\mathcal{R}}
\def\R{\mathbb{R}}
\def\N{\mathbb{N}}
\def\vol{\mathrm{vol}}
\def\C{\mathbb{C}}
\def\W{\mathrm{W}}
\def\e{\mathrm{e}}
\def\im{\mathrm{i}}
\def\sy#1#2{\sigma_{#1}\left(#2\right)}
\def\Real{\mathop\mathrm{Re}\nolimits}
\def\Im{\mathop\mathrm{Im}\nolimits}
\def\f#1#2{f_{#1;#2}}
\newcommand{\dotcup}{\ensuremath{\mathaccent\cdot\cup}}
\newcommand{\bigdotcup}[1][0pt]{\mathaccent\cdot{}\kern-#1\bigcup}
\numberwithin{equation}{section}
\begin{document}

\title{Steiner polynomials via ultra-logconcave sequences}

\author{Martin Henk}
\address{Fakult\"at f\"ur Mathematik, Otto-von-Guericke
Universit\"at Mag\-deburg, Universit\"atsplatz 2, D-39106-Magdeburg,
Germany} \email{martin.henk@ovgu.de; eugenia.saorin@ovgu.de}

\author{Mar\'\i a A. Hern\'andez Cifre}
\address{Departamento de Matem\'aticas, Universidad de Murcia, Campus de
Espinar\-do, 30100-Murcia, Spain} \email{mhcifre@um.es}

\author{Eugenia Saor\'\i n}

\thanks{Supported by MCI and DAAD, Ref. AIB2010DE-00082. Second author is
supported by Subdirecci\'on General de Proyectos de Investigaci\'on (MCI)
MTM2009-10418 and by ``Programa de Ayudas a Grupos de Excelencia de la
Regi\'on de Murcia'', Fundaci\'on S\'eneca, 04540/GERM/06.}

\subjclass[2000]{Primary 52A20, 52A39; Secondary 30C15}

\keywords{Steiner polynomials, location of roots, ultra-logconcave
sequences, truncated binomial polynomial}

\begin{abstract}
We investigate structural properties of the cone of roots of relative
Steiner polynomials of convex bodies. We prove that they are closed,
monotonous with respect to the dimension, and that they cover the whole
upper half-plane, except the positive real axis, when the dimension tends
to infinity. In particular, it turns out that relative Steiner polynomials
are stable polynomials if and only if the dimension is $\leq 9$. Moreover,
pairs of convex bodies whose relative Steiner polynomial has a complex
root on the boundary of such a cone have to satisfy some
Aleksandrov-Fenchel inequality with equality. An essential tool for the
proofs of the results is the characterization of Steiner polynomials via
ultra-logconcave sequences.
\end{abstract}

\maketitle

\section{Introduction}

Let $\K^n$ be the set of all convex bodies, i.e., compact convex sets, in
the $n$-dimensional Euclidean  space $\R^n$, and let $B_n$ be the
$n$-dimensional unit ball. The subset of $\K^n$ consisting of all convex
bodies with non-empty interior is denoted by $\K_0^n$. The volume of a set
$M\varsubsetneq\R^n$, i.e., its $n$-dimensional Lebesgue measure, is
denoted by $\vol(M)$, its boundary by $\bd M$ and its convex, affine and
linear hulls by $\conv M$, $\aff M$ and $\lin M$, respectively. For two
convex bodies $K,E\in\K^n$ and a non-negative real number $\lambda$, the
volume of the Minkowski sum $K+\lambda\,E$ is expressed as a polynomial of
degree at most $n$ in $\lambda$, and it is written as
\begin{equation}\label{eq:steiner-minkowski}
\vol(K+\lambda E)=\sum_{i=0}^n\binom{n}{i}\W_i(K;E)\lambda^i.
\end{equation}
This expression is called {\em Minkowski-Steiner formula} or {\em relative
Steiner formula} of $K$. The coefficients $\W_i(K;E)$ are the {\em
relative quermassintegrals} of $K$, and they are a special case of the
more general defined {\em mixed volumes} for which we refer to
\cite[s.~5.1]{Sch}. In particular, we have $\W_0(K;E)=\vol(K)$,
$\W_n(K;E)=\vol(E)$, $\W_i(\mu_1\,K;\mu_2\,E)=\mu_1^{n-i}\mu_2^i\W_i(K;E)$
for $\mu_1,\mu_2\geq 0$ and $\W_i(K;E)=\W_{n-i}(E;K)$.

In the following we regard the right hand side in
\eqref{eq:steiner-minkowski} as a formal polynomial in a complex variable
$z\in\C$, which we will denote by
\[
\f{K}{E}(z)=\sum_{i=0}^n\binom{n}{i}\W_i(K;E)z^i.
\]
It is known that $\W_i(K;E)\geq 0$, with equality if and only if $\dim
K<n-i$ or $\dim E<i$ (see e.g. \cite[Theorem~5.1.7]{Sch}). Hence, with
respect to the dimensions of the bodies $K,E$ we may write
\begin{equation*}
\f{K}{E}(z)=\sum_{i=n-\dim K}^{\dim E}\binom{n}{i}\W_i(K;E)z^i.
\end{equation*}
Moreover, since $\W_i(K;E)=\W_{n-i}(E;K)$ we have
$\f{K}{E}(z)=z^n\,\f{E}{K}(1/z)$, and thus, up to multiplication by real
constants,
\begin{equation}\label{e:f_KE_f_EK}
\f{K}{E}(z)\text{ and }\f{E}{K}(z)\text{ have the same non-trivial roots.}
\end{equation}
Here we are interested in the location of the roots of $\f{K}{E}(z)$. To
this end, let $\C^+=\{z\in\C:\Im(z)\geq 0\}$ be the set of complex numbers
with non-negative imaginary part, and we denote by $\R_{\leq 0}$ and
$\R_{>0}$ the non-positive and positive real axes, respectively. For any
dimension $n\geq 2$, let
\begin{equation}\label{e:R_n}
\r(n)=\bigl\{z\in\C^+: \f{K}{E}(z)=0 \text{ for }K,E\in\K^n,\,
\dim(K+E)=n\bigr\}
\end{equation}
be the set of all roots of all non-trivial Steiner polynomials in the
upper half-plane. Note,  that if $\dim(K+E)<n$ then all relative
quermassintegrals vanish and so $\f{K}{E}(z)\equiv 0$.

By the isoperimetric inequality for arbitrary gauge bodies $E$ (cf.~e.g.,
\cite[p.~317-318]{Sch}), it is easy to see that $\r(2)=\R_{\leq 0}$ is
exactly the non-positive real axis and, in particular, it is a convex
cone. For arbitrary dimensions this was verified in \cite{HHC2}. More
precisely, the following result was shown.
\begin{theorem}[{\cite[Theorem 1.1]{HHC2}}]\label{t:cone}
$\r(n)$ is a convex cone containing $\R_{\leq 0}$.
\end{theorem}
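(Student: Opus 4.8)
The plan is to prove the three assertions of Theorem~\ref{t:cone} — that $\r(n)\supseteq\R_{\leq0}$, that $\r(n)$ is a cone, and that it is convex — in that order, using only the elementary properties of quermassintegrals recalled above together with one translation identity for relative Steiner polynomials; no Aleksandrov--Fenchel inequality should be needed for this statement. For the inclusion, for each $r\geq0$ I would take $K=rB_n$ (a single point when $r=0$) and $E=B_n$: since $\vol(rB_n+\lambda B_n)=(r+\lambda)^n\vol(B_n)$ we get $\f{K}{E}(z)=\vol(B_n)(z+r)^n$, whose only root is $-r$, while $\dim(rB_n+B_n)=n$; letting $r$ run over $[0,\infty)$ gives $\R_{\leq0}\subseteq\r(n)$. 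For the cone property I would use homogeneity, $\W_i(\mu_1K;\mu_2E)=\mu_1^{n-i}\mu_2^i\W_i(K;E)$, which gives $\f{\mu_1 K}{\mu_2 E}(z)=\mu_1^n\,\f{K}{E}\!\bigl((\mu_2/\mu_1)z\bigr)$ for all $\mu_1,\mu_2>0$; hence a root $z_0$ of $\f{K}{E}$ produces the root $(\mu_1/\mu_2)z_0$ of $\f{\mu_1 K}{\mu_2 E}$, and since $\dim(\mu_1K+\mu_2E)=\dim(K+E)=n$ and $\mu_1/\mu_2$ realizes every positive value, $\r(n)$ is invariant under multiplication by positive reals; together with $0\in\R_{\leq0}\subseteq\r(n)$ this makes $\r(n)$ a cone.

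The crucial point is a translation identity: for $t\geq0$ and real $\lambda\geq0$ one has $\vol\bigl((K+tE)+\lambda E\bigr)=\vol\bigl(K+(t+\lambda)E\bigr)$, so the polynomials $z\mapsto\f{K+tE}{E}(z)$ and $z\mapsto\f{K}{E}(z+t)$ agree on $[0,\infty)$ and therefore coincide on all of $\C$. Thus the roots of $\f{K+tE}{E}$ are exactly the roots of $\f{K}{E}$ shifted by $-t$; since $\dim\bigl((K+tE)+E\bigr)=\dim\bigl(K+(1+t)E\bigr)=\dim(K+E)=n$ and a real shift leaves the imaginary part unchanged, this shows that $\r(n)$ is closed under subtracting non-negative reals: $z_0\in\r(n)$ and $t\geq0$ imply $z_0-t\in\r(n)$.

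Convexity now follows from the cone property, the inclusion $\R_{\leq0}\subseteq\r(n)$, and this stability. First note that $\f{K}{E}$ has non-negative coefficients and $\f{K}{E}(1)=\vol(K+E)>0$, so it has no root in $\R_{>0}$; hence every nonzero $z\in\r(n)$ has $\arg z\in(0,\pi]$. Put $\alpha=\inf\{\arg z:z\in\r(n)\setminus\{0\}\}\in[0,\pi]$, so $\r(n)\subseteq\{0\}\cup\{re^{\im\theta}:r>0,\ \theta\in[\alpha,\pi]\}$. Conversely, the ray of argument $\pi$ is $\R_{<0}\subseteq\r(n)$; and for $\theta\in(\alpha,\pi)$ I would choose $z'\in\r(n)$ with $\alpha\leq\arg z'<\theta$, note that $\Im z'>0$, and follow the curve $t\mapsto z'-t$, $t\geq0$, which stays in $\r(n)$ while its argument increases continuously from $\arg z'$ towards $\pi$, hence equals $\theta$ for some $t$; rescaling then puts the entire ray of argument $\theta$ into $\r(n)$. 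Therefore $\r(n)$ is one of the sectors $\{0\}\cup\{re^{\im\theta}:r>0,\ \theta\in[\alpha,\pi]\}$ or $\{0\}\cup\{re^{\im\theta}:r>0,\ \theta\in(\alpha,\pi]\}$, each a convex cone of opening at most $\pi$.

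The translation identity makes the convexity essentially free, so the main work will be in the routine verifications: that a numerical equality of volumes valid only for real non-negative parameters forces an identity of complex polynomials, that none of the bodies used ($K$ a point, dilates, $K+tE$) lowers $\dim(K+E)$ below $n$, and — the point to be careful about — that the last step genuinely uses the absence of positive real roots, since a cone containing $\R_{\leq0}$ and stable under subtracting non-negative reals need not be convex without it.
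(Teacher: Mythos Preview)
This theorem is not proved in the present paper; it is quoted from \cite[Theorem~1.1]{HHC2} and used as a black box. Hence there is no proof here to compare your argument against.

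That said, your argument is correct and self-contained. The translation identity you isolate, $\f{K+tE}{E}(z)=\f{K}{E}(z+t)$, is exactly \cite[Lemma~2.1]{HHC2} (invoked later in this paper in the proof of Proposition~\ref{p:r(4)}), so your route is in all likelihood close in spirit to the original. The three ingredients---homogeneity for the cone property, the translation lemma for stability under real left-shifts, and non-negativity of the coefficients to exclude $\R_{>0}$---combine exactly as you describe to force $\r(n)$ to be an angular sector in $\C^+$ of opening at most $\pi$, hence convex. Your closing caveat is apt: without the exclusion of positive real roots the sector argument would indeed break down, so that observation is not decorative but essential.
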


Hence one ray of the boundary of $\r(n)$ consists of the non-positive real
axis $\R_{\leq 0}$, and, of course,  any odd-degree Steiner polynomial has
a root on this boundary. The ``other ray'' of the boundary of $\r(n)$
seems to have more geometric structure. We  call a pair of convex bodies
$(K,E)\in\K^n\times\K^n$ a {\it boundary-pair} if the Steiner polynomial
$\f{K}{E}(z)$ has a root on the boundary $\bd\r(n)\backslash\R_{\leq 0}$,
and in view of \eqref{e:f_KE_f_EK} we may additionally assume $\dim
K\leq\dim E$.

Regarding the $3$-dimensional case, in \cite{HHC2} the following
characterization was given.
\begin{proposition}[{\cite[Theorem 1.2]{HHC2}}]\label{p:cone3}
$\r(3)=\left\{x+y\im\in\C^+: x+\sqrt{3}\,y\leq 0\right\}$. Moreover, a
pair $(K,E)$ is a boundary-pair if and only if $\dim K=2$, $\dim E=3$ and
$\W_2(K;E)^2=\W_1(K;E)\W_3(K;E)$.
\end{proposition}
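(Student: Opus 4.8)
The plan is to locate the roots of $\f{K}{E}(z)$ case by case according to $(\dim K,\dim E)$. By \eqref{e:f_KE_f_EK} I would normalize to $\dim K\le\dim E$ and write $a_i=\W_i(K;E)$; all $a_i\ge0$, the nonzero ones are consecutive, and the Aleksandrov--Fenchel inequalities give $a_1^2\ge a_0a_2$ and $a_2^2\ge a_1a_3$ (cf.~\cite{Sch}). Since $\dim(K+E)=3$, the possible pairs are $\dim E=3$ with $\dim K\in\{0,1,2,3\}$, or $(\dim K,\dim E)\in\{(1,2),(2,2)\}$. In every case except $(2,3)$ and $(3,3)$, $\f{K}{E}$ equals, up to a power of $z$, a non-zero constant or a linear polynomial $c_0+c_1z$ with $c_0\ge0$, $c_1>0$, so all its roots lie on $\R_{\le0}\subseteq\r(3)$; such pairs are never boundary-pairs. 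For $\dim K=2$, $\dim E=3$ one has $a_0=0<a_1,a_2,a_3$ and $\f{K}{E}(z)=z\bigl(a_3z^2+3a_2z+3a_1\bigr)$, whose roots are $z=0$, two negative reals when $3a_2^2\ge4a_1a_3$, and otherwise a conjugate pair whose member in $\C^+$ is $z_+=\bigl(-3a_2+\im\sqrt{12a_1a_3-9a_2^2}\bigr)/(2a_3)$; here $\Real z_++\sqrt3\,\Im z_+=\bigl(-3a_2+\sqrt{36a_1a_3-27a_2^2}\,\bigr)/(2a_3)\le0$ precisely because $a_1a_3\le a_2^2$, with equality iff $a_2^2=a_1a_3$.

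The decisive case is $\dim K=\dim E=3$, i.e.\ $a_0,a_1,a_2,a_3>0$; the claim to prove is that every non-real root $z_0=x_0+\im y_0$ then satisfies $\Real z_0+\sqrt3\,|\Im z_0|<0$, equivalently $x_0<0$ and $x_0^2>3y_0^2$. I would factor the cubic as $a_3(z+\tau)(z^2-2x_0z+\rho)$ with $\rho=|z_0|^2$: the real root $-\tau$ is negative since the product of all roots is $-a_0/a_3<0$ while $|z_0|^2>0$, and $x_0^2<\rho$ because the quadratic factor has non-real roots. Matching coefficients gives $\tau-2x_0=3a_2/a_3$, $\rho-2x_0\tau=3a_1/a_3$, $\rho\tau=a_0/a_3$. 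Substituting $x_0=\alpha\sqrt\rho$, $\tau=\beta\sqrt\rho$ with $\beta>0$ and $\alpha\in(-1,1)$, positivity of $a_1,a_2$ becomes $2\alpha\beta<1$ and $2\alpha<\beta$, and the two Aleksandrov--Fenchel inequalities become
\[
P(\beta)\ge0\qquad\text{and}\qquad P(1/\beta)\ge0,\qquad P(t):=t^2+2\alpha t+(4\alpha^2-3)
\]
(the second since $(1-2\alpha\beta)^2\ge3\beta(\beta-2\alpha)$ is exactly $\beta^2P(1/\beta)\ge0$). Suppose $\alpha\ge-\sqrt3/2$; if $\alpha>0$ then $2\alpha<\beta<1/(2\alpha)$ forces $4\alpha^2<1$, so in any case $\alpha\in[-\sqrt3/2,\tfrac12)$. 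On this interval $P$ has a non-positive root and a positive root $t_+=-\alpha+\sqrt{3(1-\alpha^2)}$, so $P(\beta)\ge0$ and $P(1/\beta)\ge0$ with $\beta,1/\beta>0$ force $t_+\le\beta\le1/t_+$, hence $t_+^2\le1$; but $t_+^2=3-2\alpha^2-2\alpha\sqrt{3(1-\alpha^2)}>1$ throughout $[-\sqrt3/2,\tfrac12)$ (a short check, splitting $\alpha\le0$ from $0<\alpha<\tfrac12$), a contradiction. Thus $\alpha<-\sqrt3/2$.

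Combining the cases gives $\r(3)\subseteq\{x+y\im\in\C^+:x+\sqrt3y\le0\}$. For equality I would use Theorem~\ref{t:cone}: $\r(3)$ is a convex cone containing $\R_{\le0}$, so it is enough to exhibit one root on the open ray $\{x+\sqrt3y=0,\ y>0\}$. With $K$ the unit disk in $e_3^\perp$ and $E=\conv\bigl(K\cup\{e_3,-e_3\}\bigr)$ the bipyramid over $K$, slicing orthogonally to $e_3$ yields $\vol(K+\lambda E)=\pi\lambda\int_{-1}^{1}\bigl(1+\lambda(1-|s|)\bigr)^2\,ds=2\pi\lambda+2\pi\lambda^2+\tfrac{2\pi}{3}\lambda^3$, so $\f{K}{E}(z)=\tfrac{2\pi}{3}z(z^2+3z+3)$ with root $z_0=(-3+\sqrt3\,\im)/2$ on that ray; by convexity $\r(3)\supseteq\conv\bigl(\R_{\le0}\cup\{\lambda z_0:\lambda\ge0\}\bigr)=\{x+y\im\in\C^+:x+\sqrt3y\le0\}$. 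For the boundary-pairs: if a normalized $(K,E)$ is one, then $\dim E=3$ and $\f{K}{E}$ has a root $z_0$ with $\Im z_0>0$ on $\{x+\sqrt3y=0\}$; $\dim K\le1$ is excluded (roots on $\R_{\le0}$) and $\dim K=3$ is excluded by the previous paragraph (a non-real root of a full cubic satisfies the \emph{strict} inequality), so $\dim K=2$, $z_0=z_+$, and $\Real z_++\sqrt3\Im z_+=0$ gives $a_1a_3=a_2^2$, i.e.\ $\W_2(K;E)^2=\W_1(K;E)\W_3(K;E)$. Conversely $\dim K=2$, $\dim E=3$, $\W_2^2=\W_1\W_3$ give $4a_1a_3=4a_2^2>3a_2^2$, so $z_+$ is non-real with $\Real z_++\sqrt3\Im z_+=0$; since $z_+\in\r(3)$ lies on the boundary of the cone $\{x+\sqrt3y\le0\}\cap\C^+\supseteq\r(3)$, it lies in $\bd\r(3)\setminus\R_{\le0}$, so $(K,E)$ is a boundary-pair.

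The step I expect to be the main obstacle is the cubic case: converting the two Aleksandrov--Fenchel inequalities (together with $a_1,a_2>0$) into the sharp statement that the non-real roots of a full cubic lie in the open cone. The computation is elementary but requires the substitution $x_0=\alpha\sqrt\rho$, $\tau=\beta\sqrt\rho$ and a careful sign analysis; in particular both Aleksandrov--Fenchel inequalities are genuinely needed (the single inequality $a_2^2\ge a_1a_3$ does not suffice), as is the positivity of $a_1,a_2$ --- e.g.\ $a_0+a_3z^3$ with $a_0,a_3>0$ already has a non-real root with positive real part.
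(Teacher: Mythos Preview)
Your argument is correct. Note, however, that the present paper does \emph{not} prove Proposition~\ref{p:cone3}: it is quoted from \cite[Theorem~1.2]{HHC2}, so there is no ``paper's own proof'' here to compare against directly.

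That said, your approach is close in spirit to how this paper treats the analogous $4$-dimensional statement (Proposition~\ref{p:r(4)}, Section~\ref{s:apendix}), but the mechanics differ. For the inclusion $\r(4)\subseteq\{x+y\le 0\}$ the paper fixes a putative root $-1+(1+\varepsilon)\im$ with $\varepsilon>0$, shifts to $K+\varepsilon E$ via the identity $\f{K+\varepsilon E}{E}(z)=\f{K}{E}(z+\varepsilon)$, and then forces a contradiction from the boundary characterization combined with the Steiner formula for quermassintegrals. Your treatment of the full-dimensional cubic is instead a self-contained root computation: the substitution $x_0=\alpha\sqrt{\rho}$, $\tau=\beta\sqrt{\rho}$ and the observation that the two Aleksandrov--Fenchel inequalities become $P(\beta)\ge 0$ and $P(1/\beta)\ge 0$ for the same quadratic $P$ is neat and makes the sharpness transparent. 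The paper's $4$-dimensional proof also realizes the boundary by a truncated binomial polynomial (here $P_{1,3}^3(z)=z(z^2+3z+3)$ would do; cf.\ Table~\ref{ta:computations}), whereas you give an explicit geometric pair (disk and bipyramid); both are valid, and your example is exactly of the cap-body type singled out after Proposition~\ref{p:cone3}. A small remark: in the low-dimensional enumeration you could shortcut the explicit bipyramid computation by invoking Proposition~\ref{prop:truncated_steiner} for $P_{1,3}^3$, which is available in this paper.
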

We notice that two convex bodies $K,E\in\K^n$ satisfy the above conditions
if and only if $E\in\K^3_0$ is a cap-body of (an homothet of) a planar
convex body $K$ (see \cite{Bol}). A convex body $L$ is called a cap-body
of $M\in\K^n$ if $L$ is the convex hull of $M$ and countably many points
such that the line segment joining any pair of these points intersects
$M$.

Here we will also extend the exact description of the cones $\r(n)$ to the
case $n=4$, and get similarly to $n=3$ the following characterization.
\begin{proposition}\label{p:r(4)}
$\r(4)=\left\{x+y\im\in\C^+: x+y\leq 0\right\}$. Moreover, a pair $(K,E)$
is a boundary-pair if and only if $\dim K=3$, $\dim E=4$ and, for $i=2,3$,
$\W_i(K;E)^2=\W_{i-1}(K;E)\W_{i+1}(K;E)$.
\end{proposition}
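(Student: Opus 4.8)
The plan is to bracket $\r(4)$: first prove $\r(4)\subseteq C$ with $C:=\{x+y\im\in\C^+:x+y\le 0\}$, then exhibit a Steiner polynomial with a root on the ray $\{x+y\im:x+y=0,\ y\ge 0\}$, and finally revisit the first step to read off the boundary-pairs. Throughout I will use the Aleksandrov--Fenchel inequalities $\W_i(K;E)^2\ge\W_{i-1}(K;E)\W_{i+1}(K;E)$ (ultra-logconcavity of the coefficient sequence of $\f{K}{E}$) and, for the lower bound on $\r(4)$, the converse characterization, which realizes any admissible ultra-logconcave sequence by an actual pair of convex bodies. By \eqref{e:f_KE_f_EK} I may assume $\dim K\le\dim E$. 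Since $\f{K}{E}(z)=\sum_{i=4-\dim K}^{\dim E}\binom4i\W_i(K;E)z^i$, one has $\f{K}{E}(z)=z^{\,4-\dim K}g(z)$ with $g(0)\neq 0$ and $\deg g=\dim K+\dim E-4$, so only the roots of $g$ matter; as $\dim(K+E)=4$ and $g$ has no complex roots when $\deg g\le 1$, the cases to treat are $\deg g=2$ (that is, $(\dim K,\dim E)\in\{(3,3),(2,4)\}$), $\deg g=3$ (that is, $(3,4)$) and $\deg g=4$ (that is, $(4,4)$).

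For the inclusion, note that $\f{K}{E}$ has non-negative coefficients, hence no positive real root, and that $\r(4)$ is by Theorem~\ref{t:cone} a convex cone containing $\R_{\le 0}$; thus $\r(4)=\{re^{\im\theta}:r\ge 0,\ \theta_0\le\theta\le\pi\}$ for some $\theta_0\in(0,\pi]$, and it suffices to rule out a root $z_0=x+y\im$ of $g$ with $y>0$ and $x+y>0$. When $\deg g=2$ this should be a short discriminant computation: solving the quadratic and invoking a single Aleksandrov--Fenchel inequality ($\W_3^2\ge\W_2\W_4$ in the $(2,4)$-case, $\W_2^2\ge\W_1\W_3$ in the $(3,3)$-case) contradicts $x+y>0$, in fact strictly. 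When $\deg g\in\{3,4\}$ I would factor $g(z)=\W_{\dim E}(z^2+az+b)h(z)$ over $\R$, choosing the quadratic factor to carry $z_0$, so that $x=-a/2$ and $b=x^2+y^2>a^2/4$, whence the desired conclusion $x+y\le 0$ is precisely ``$a\ge 0$ and $a^2\ge 2b$''. Matching coefficients expresses $\W_0,\dots,\W_{\dim E}$ through $a,b$ and the coefficients of $h$; substituting into the (two, respectively three) Aleksandrov--Fenchel inequalities should then force $a\ge 0$ and $a^2\ge 2b$, strictly unless $\dim K=3$, and in the $(4,4)$-case equality would additionally force $\W_0=0$, which is impossible there. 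Consequently every complex root of $\f{K}{E}$ lies in $C$, and one on $\bd C\setminus\R_{\le 0}$ (i.e.\ at argument $3\pi/4$) can only occur for $\dim K=3$, $\dim E=4$.

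For the reverse inclusion I would take $K,E$ with $\dim K=3$, $\dim E=4$ and $\W_i(K;E)=t^{\,i-1}$ ($i=1,\dots,4$), $\W_0(K;E)=0$, with $t>0$ --- such a pair exists by the converse characterization, since this sequence is ultra-logconcave (all Aleksandrov--Fenchel inequalities hold with equality). Then
\[
\f{K}{E}(z)=z\bigl(4+6tz+4t^2z^2+t^3z^3\bigr)=\frac1t\bigl[(1+tz)^4-1\bigr],
\]
the truncated binomial polynomial, vanishing at $z=(-1+\im)/t$, which lies on the ray $x+y=0$; hence $\theta_0\le 3\pi/4$, and combined with the previous paragraph $\theta_0=3\pi/4$, i.e.\ $\r(4)=C$.

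Finally, for the ``Moreover'' part: a pair is a boundary-pair exactly when $\f{K}{E}$ has a root at argument $3\pi/4$, and by the above this forces $\dim K=3$, $\dim E=4$, so $\W_0=0$, $\W_1,\dots,\W_4>0$ and $\f{K}{E}(z)=z\,g(z)$ with $g(z)=\W_4z^3+4\W_3z^2+6\W_2z+4\W_1$. Writing $g(z)=\W_4(z^2-2xz+m)(z-r)$ with $z_0=x+y\im$ the complex root ($m=x^2+y^2$, $r<0$) and imposing $x=-y$, I expect the inequality $\W_3^2\ge\W_2\W_4$ to force $r\le-2y$ and $\W_2^2\ge\W_1\W_3$ to force $r\ge-2y$, hence $r=-2y$ and both inequalities tight, i.e.\ $\W_2^2=\W_1\W_3$ and $\W_3^2=\W_2\W_4$; conversely these two equalities give $\W_i=\W_1t^{\,i-1}$ with $t=\W_2/\W_1$, and the computation above exhibits the root $(-1+\im)/t$ at argument $3\pi/4$. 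The step I expect to be the main obstacle is the algebra in the second paragraph --- showing that the Aleksandrov--Fenchel inequalities, rewritten in $a,b$ and the coefficients of $h$, genuinely imply $a\ge 0$ and $a^2\ge 2b$ in the cubic case and strictly in the quartic case, and that equality in the quartic case can only occur when $\W_0=0$ --- together with keeping the bookkeeping of the degenerate positions $(\dim K,\dim E)$ straight.
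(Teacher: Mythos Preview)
Your outline is sound and the pieces you did check are correct: the two $\deg g=2$ cases indeed give $x+y<0$ strictly from a single Aleksandrov--Fenchel inequality, and your equality analysis in the ``Moreover'' part (AF$_3$ forcing $c\ge 2y$, AF$_2$ forcing $c\le 2y$, hence $c=2y$ and both tight) is exactly right. The reverse inclusion via $P_{1,4}^4$ is also what the paper does.

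Where you and the paper diverge is in the proof of $\r(4)\subseteq C$ for $\deg g\in\{3,4\}$. You propose a \emph{direct} algebraic attack: factor $g(z)=\W_{\dim E}(z^2+az+b)h(z)$ for a generic complex root and squeeze $a\ge 0$, $a^2\ge 2b$ out of the full system of AF inequalities. The paper instead works only at the single point $-1+\im$: it writes $\f{K}{E}(z)=\W_4(z^2+2z+2)(z^2+cz+d)$ (or $(z^2+2z+2)(cz+d)$ when $\dim E<4$), and the three AF inequalities collapse to a small system in $c,d$ whose only non-negative solution is $c=2$, $d=0$. This classifies all pairs with a root at argument $3\pi/4$. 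For the \emph{strict} containment the paper then uses a shift trick (\cite[Lemma~2.1]{HHC2}): if $\gamma=-1+(1+\varepsilon)\im$ were a root of $\f{K}{E}$, then $\gamma-\varepsilon=-(1+\varepsilon)(1-\im)$ is a root of $\f{K+\varepsilon E}{E}$, which again has argument $3\pi/4$; the classification just obtained then forces relations among the $\W_i(K+\varepsilon E;E)$ which, once unwound via the Steiner formulae for quermassintegrals, contradict AF$_2$ for the original pair.

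So the paper trades your one heavy algebraic step (AF inequalities in the parameters $a,b$ \emph{and} the coefficients of $h$, for all admissible roots) for two lighter ones: an algebraic classification at a single point, plus a geometric reduction via $K\mapsto K+\varepsilon E$. Your route is more self-contained (no external lemma, no Steiner formulae for quermassintegrals), but the step you flag as ``the main obstacle'' is genuinely the bulk of the work in your approach, and it is precisely what the paper's shift trick is designed to avoid; if you pursue your route, note that by the cone property you may normalise the putative root to lie on a fixed ray, which reduces the $(4,4)$ analysis to essentially the same one-parameter system the paper solves at $\varepsilon=0$.
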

However, in contrast to the case $n=3$ we are not aware of an equivalent
geometric description of the boundary pairs $(K,E)$ in dimension $4$.

The cones $\r(2),\r(3),\r(4)$ are in particular closed, and our first main
result verifies this in any dimension.
\begin{theorem}\label{t:R_n_closed}
The cone $\r(n)$ is closed.
\end{theorem}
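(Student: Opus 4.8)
The plan is to reduce the whole statement to a compactness property of ultra-logconcave sequences. Recall that a finite non-negative sequence $(a_0,\dots,a_n)$ is \emph{ultra-logconcave} if $\bigl(a_i/\binom{n}{i}\bigr)_i$ is logconcave. By the Aleksandrov--Fenchel inequalities the relative quermassintegrals satisfy $\W_i(K;E)^2\geq\W_{i-1}(K;E)\W_{i+1}(K;E)$, so the coefficient sequence $\bigl(\binom{n}{i}\W_i(K;E)\bigr)_i$ of every relative Steiner polynomial $\f{K}{E}$ is ultra-logconcave; conversely, the characterization of Steiner polynomials via ultra-logconcave sequences --- the main technical tool of the paper --- shows that every polynomial $\sum_{i=0}^{n}\binom{n}{i}a_iz^i$ whose coefficient sequence $(a_0,\dots,a_n)$ is non-zero, non-negative and ultra-logconcave equals, up to a positive factor, $\f{K}{E}$ for suitable $K,E\in\K^n$ with $\dim(K+E)=n$. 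First I would use this to record the description
\[
\r(n)=\Bigl\{z\in\C^+ : \textstyle\sum_{i=0}^{n}\binom{n}{i}a_iz^i=0 \ \text{for some non-zero, non-negative, ultra-logconcave}\ (a_0,\dots,a_n)\Bigr\}.
\]

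Next I would pass to a compact parameter space. Multiplying a polynomial by a positive constant and replacing $(a_i)$ by $(a_i/\|a\|_\infty)$ changes neither ultra-logconcavity nor the set of roots, so in the description above it is enough to range over coefficient vectors in
\[
\mathcal A=\bigl\{a\in\R^{n+1} : a_i\geq 0\ \text{for all}\ i,\ a\ \text{ultra-logconcave and}\ \|a\|_\infty=1\bigr\}.
\]
The key point is that $\mathcal A$ is compact: it lies in $[0,1]^{n+1}$, and it is closed because non-negativity, the finitely many ultra-logconcavity inequalities $a_i^2\binom{n}{i-1}\binom{n}{i+1}\geq a_{i-1}a_{i+1}\binom{n}{i}^2$ ($1\leq i\leq n-1$) and the normalization $\|a\|_\infty=1$ are all closed conditions; in particular, when an endpoint entry of an ultra-logconcave sequence degenerates to $0$ along a limit, the limit is again ultra-logconcave (with smaller support), so nothing is lost.

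Then closedness of $\r(n)$ follows from a soft continuity argument that, importantly, does not require tracking all roots. Given $z_k\in\r(n)$ with $z_k\to z_\ast$, I would pick $a^{(k)}\in\mathcal A$ with $\sum_{i=0}^{n}\binom{n}{i}a^{(k)}_iz_k^i=0$, pass (by compactness of $\mathcal A$) to a subsequence with $a^{(k)}\to a^{(\ast)}\in\mathcal A$, and then note that, since only $n+1$ terms are involved and $a^{(k)}_i\to a^{(\ast)}_i$, $z_k^i\to z_\ast^i$,
\[
\textstyle\sum_{i=0}^{n}\binom{n}{i}a^{(\ast)}_iz_\ast^i=\lim_{k}\sum_{i=0}^{n}\binom{n}{i}a^{(k)}_iz_k^i=0.
\]
Since $\|a^{(\ast)}\|_\infty=1$ forces $a^{(\ast)}\neq 0$, and $z_\ast\in\C^+$ because $\C^+$ is closed, the displayed description of $\r(n)$ gives $z_\ast\in\r(n)$. (The convex-cone structure of $\r(n)$ from Theorem~\ref{t:cone} is not needed here; it would only offer an alternative reduction to a compact slice.)

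The hard part is not this limiting argument but securing the ultra-logconcave characterization in the \emph{strong} form used above --- in particular its realizability half, namely that \emph{every} non-negative ultra-logconcave sequence of length $n+1$ actually occurs as $\bigl(\binom{n}{i}\W_i(K;E)\bigr)_i$ for some $K,E\in\K^n$ with $\dim(K+E)=n$. That is precisely what lets one sidestep a direct Blaschke-selection analysis for the bodies $K_k,E_k$ themselves, which may degenerate in the limit --- collapsing in dimension or escaping to infinity under any fixed normalization --- in ways that are cumbersome to control at the level of convex bodies.
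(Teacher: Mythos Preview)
Your approach is the paper's: both normalize the coefficient vectors of Steiner polynomials (you via $\|\cdot\|_\infty=1$, the paper via $\sum_i\binom{n}{i}\W_i=\vol(K+E)=1$), use compactness to extract a convergent subsequence of coefficient vectors, and then invoke Lemma~\ref{l:charact_Steiner} to recognize the limit polynomial as a genuine Steiner polynomial.

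There is one slip. Your displayed description of $\r(n)$ drops condition~i) of Lemma~\ref{l:charact_Steiner} (interval support), and ultra-logconcavity alone does not imply it: for $n=3$ the sequence $(1,0,0,1)$ is ultra-logconcave, yet $1+z^3$ is not a Steiner polynomial and its root $\e^{\im\pi/3}$ lies outside $\r(3)$ by Proposition~\ref{p:cone3}. Hence your final step, ``the displayed description of $\r(n)$ gives $z_\ast\in\r(n)$'', is not justified as written. The fix is immediate: restrict $\mathcal A$ to sequences with interval support and note that this is also a closed condition, since for log-concave $b_i:=a_i/\binom{n}{i}$ with $b_p,b_q>0$ one has $b_m\geq b_p^{(q-m)/(q-p)}b_q^{(m-p)/(q-p)}>0$ for all $p\le m\le q$, so no interior entry can degenerate to $0$ while the endpoints stay positive. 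The paper's proof is equally terse on this very point (its ``Therefore'' just before invoking Lemma~\ref{l:charact_Steiner}), but it does not assert the false equality.
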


The low dimensional cones are also strictly nested, i.e.,
$\r(2)\varsubsetneq\r(3)\varsubsetneq\r(4)$. Our second theorem shows that
this is also true in general.
\begin{theorem}\label{t:R_n_inclusion}
$\r(n)\varsubsetneq\r(n+1)$.
\end{theorem}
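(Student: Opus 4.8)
The statement has two parts: the containment $\r(n)\subseteq\r(n+1)$ and its strictness. For the containment I would argue with cylinders. Given $z_0\in\r(n)$, pick $K,E\in\K^n$ with $\dim(K+E)=n$ and $\f{K}{E}(z_0)=0$, and set $\bar K=K\times[0,1]$ and $\bar E=E\times[0,1]$ in $\R^{n+1}=\R^n\times\R$. From $\bar K+\lambda\bar E=(K+\lambda E)\times[0,1+\lambda]$ we get $\vol_{n+1}(\bar K+\lambda\bar E)=(1+\lambda)\,\vol_n(K+\lambda E)$, i.e.\ $\f{\bar K}{\bar E}(z)=(1+z)\f{K}{E}(z)$ as formal polynomials, while $\dim(\bar K+\bar E)=n+1$. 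Hence $z_0$ is a root of a non-trivial Steiner polynomial in dimension $n+1$, so $z_0\in\r(n+1)$. (In the language of the ultra-logconcave characterization this is just the observation that an ultra-logconcave sequence remains ultra-logconcave --- now with respect to $\binom{n+1}{i}$ --- when a trailing zero is appended, since $c_i/\binom{n+1}{i}=\tfrac{n+1-i}{n+1}\,c_i/\binom{n}{i}$ is a product of log-concave sequences.)

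For the strictness I would show that the second boundary ray of $\r(n)$, say $\{re^{\im\alpha_n}:r\geq0\}$ with $\alpha_n\in(0,\pi]$ (here $\alpha_2=\pi$, and $\alpha_n<\pi$ for $n\geq3$, with $\alpha_3=\tfrac{5\pi}{6}$, $\alpha_4=\tfrac{3\pi}{4}$), lies in the interior of $\r(n+1)$. Since $\r(n)$ and $\r(n+1)$ are closed convex cones in $\C^+$ containing $\R_{\leq0}$ (Theorems~\ref{t:cone} and~\ref{t:R_n_closed}), it suffices to exhibit one pair of convex bodies in $\R^{n+1}$ whose Steiner polynomial has a root $w\in\C^+$ with $\arg w<\alpha_n$. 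By Theorem~\ref{t:R_n_closed} there is a boundary-pair $(K,E)\in\K^n\times\K^n$, i.e.\ a non-trivial Steiner polynomial $p=\f{K}{E}$ with a root $w_0$, $\arg w_0=\alpha_n$, $w_0\notin\R_{\leq0}$. Viewed as a polynomial of degree $n+1$, $p$ has an ultra-logconcave coefficient sequence which is not extremal at the top (either the top coefficient of $p$ vanishes, or the ultra-logconcavity inequality at index $n$ is strict because the leading coefficient of $p$ is positive), so there are admissible perturbations raising $\deg p$ towards $n+1$; by the ultra-logconcave characterization each such perturbation $p_t$ ($t>0$ small) is again a Steiner polynomial in dimension $n+1$. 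Its simple root $w(t)$ near $w_0$ then varies analytically, and the plan is to choose the perturbation so that $\arg w(t)$ is strictly decreasing at $t=0$, i.e.\ so that $w(t)$ leaves the cone $\r(n)$ through the ray $\{re^{\im\alpha_n}:r\geq0\}$; combining the (few) admissible directions of perturbation, the root $w_0$ can be made to move through a two-dimensional cone of velocities, which --- unless it is entirely contained in the closed half-plane of velocities that keep $\arg$ non-decreasing at $w_0$ --- must meet the open half-plane of velocities that decrease it.

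The main obstacle is precisely this last alternative: one has to rule out the degenerate situation in which \emph{all} admissible perturbations keep $w_0$ inside $\r(n)$. I expect this to be settled using the explicit shape of Steiner polynomials with a root on $\bd\r(n)$: by the ultra-logconcave characterization combined with the Aleksandrov--Fenchel equality conditions (compare Propositions~\ref{p:cone3} and~\ref{p:r(4)}), such an extremal polynomial equals, up to a positive factor and a substitution $z\mapsto cz$ with $c>0$, a truncated binomial polynomial $z^{\,l}\sum_{j\geq0}\binom{n}{l+j}z^{\,j}$; the corresponding truncated binomial polynomial built from Pascal's row $n+1$ is again ultra-logconcave, hence a Steiner polynomial in dimension $n+1$, and the remaining --- and, I believe, genuinely hardest --- step is to show by a direct analysis of the roots of these truncated binomial polynomials that passing from row $n$ to row $n+1$ strictly decreases the minimal argument of a root, i.e.\ $\alpha_{n+1}<\alpha_n$.
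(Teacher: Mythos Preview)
Your inclusion argument is fine (the paper uses the variant $E'=E\times[0,1]$, giving $\f{K}{E'}(z)=z\,\f{K}{E}(z)$, but your cylinder construction works just as well).

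The strictness argument, however, has a genuine gap. Your own account of it is honest: you reach a point where you need to rule out the possibility that every admissible perturbation keeps the boundary root $w_0$ inside $\r(n)$, and to close this gap you invoke the claim that every boundary polynomial is (up to scaling) a truncated binomial polynomial. That claim is exactly Conjecture~\ref{co:binomial}, which is open for $n\geq 5$; and even granting it, you still need a separate monotonicity statement about the minimal argument of roots of truncated binomial polynomials, which you do not prove either. So as written, your argument for strictness is conditional on two unproven statements.

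The paper avoids this entirely by a single elementary inequality you overlook: the constants $c_{i,n}$ in \eqref{e:ultra-log} satisfy
\[
c_{r+1-i,\,n+1}\;>\;c_{r-i,\,n}\qquad\text{for all }1\leq i\leq r,
\]
since $\dfrac{(j+1)^2}{j(j+2)}>1$. Concretely, take the boundary root $\gamma_1\in\bd\r(n)\setminus\R_{\leq 0}$ with Steiner polynomial having roots $\gamma_1,\overline{\gamma_1},\gamma_3,\dots,\gamma_r$, append an extra zero root (this is exactly the ``multiply by $z$'' step), and pass to dimension $n+1$. By Corollary~\ref{c:ultra-log_roots}, the conditions in dimension $n+1$ read $c_{r+1-i,n+1}\,\sigma_i^2\geq\sigma_{i-1}\sigma_{i+1}$, and the inequality above together with $\sigma_i^2>0$ makes every non-trivial one of these \emph{strict}. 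By continuity there is then room to perturb the pair $(\gamma_1,\overline{\gamma_1})$ by $(\varepsilon z,\varepsilon\overline{z})$ for \emph{every} unit $z\in\C$ and small $\varepsilon>0$, still satisfying both conditions of Corollary~\ref{c:ultra-log_roots}. Hence a full disc around $\gamma_1$ lies in $\r(n+1)$, so $\gamma_1\in\inter\r(n+1)$, and strictness follows immediately---no analysis of which direction the root moves, no degenerate case to exclude, and no dependence on Conjecture~\ref{co:binomial}.
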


So, the following question arises in a natural way: does $\r(n)$ cover the
whole upper half-plane $\C^+$, except $\R_{>0}$, when $n$ tends to
infinity? Next theorem  gives an affirmative answer to it.
\begin{theorem}\label{t:n->infty}
Let $\gamma\in\C^+\setminus\R_{>0}$. Then there exists $n_{\gamma}\in\N$
with $\gamma\in\r(n)$ for all $n\geq n_{\gamma}$.
\end{theorem}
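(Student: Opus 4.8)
The first step is to strip the statement down. By the strict monotonicity of the cones (Theorem~\ref{t:R_n_inclusion}) it is enough to show that a given $\gamma$ lies in $\r(n)$ for \emph{some} dimension $n$; then $\gamma\in\r(n)$ for all larger $n$ as well. If $\gamma\in\R_{\leq0}$ this holds already for $n=2$, as $\r(2)=\R_{\leq0}$, so assume $\Im\gamma>0$ and write $\gamma=Re^{\im\theta}$ with $R>0$, $\theta\in(0,\pi)$. By Theorems~\ref{t:cone} and \ref{t:R_n_closed}, and since a non-trivial Steiner polynomial has non-negative coefficients (hence no positive real root), each $\r(n)$ is a closed convex cone contained in $\C^+$, containing the ray $\R_{\leq0}$ and disjoint from $\R_{>0}$; hence $\r(n)=\{re^{\im\varphi}:r\ge0,\ \theta_n\le\varphi\le\pi\}$ for a well-defined $\theta_n\in(0,\pi]$, and $\theta_{n+1}\le\theta_n$. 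Being a cone, $\r(n)$ contains $\gamma$ as soon as $\theta_n\le\theta$, so the theorem reduces to proving $\theta_n\to0$: relative Steiner polynomials acquire roots of arbitrarily small argument as the dimension grows.

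For the construction I would use the truncated binomial polynomials $T_{n,k}(z)=\sum_{i=0}^{k}\binom{n}{i}z^{i}$, $1\le k\le n$. The quermassintegral data $\W_i=1$ for $i\le k$ and $\W_i=0$ for $i>k$ form a log-concave sequence, so the coefficient sequence $\bigl(\binom ni\bigr)_{i=0}^{k}$ of $T_{n,k}$ is ultra-logconcave of order $n$. By the characterization of Steiner polynomials via ultra-logconcave sequences, $T_{n,k}$ is therefore the relative Steiner polynomial $\f{K}{E}$ of a pair with $\dim K=n$ — so $\dim(K+E)=n$ — and consequently every root of $T_{n,k}$ lying in $\C^+$ belongs to $\r(n)$.

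It remains to locate these roots. For fixed $k$ one has $n^{-i}\binom ni\to 1/i!$ as $n\to\infty$, so $T_{n,k}(z/n)=\sum_{i=0}^{k}n^{-i}\binom ni\,z^{i}$ converges coefficientwise to the truncated exponential $e_k(z)=\sum_{i=0}^{k}z^{i}/i!$; by Hurwitz's theorem the zeros of $T_{n,k}$, dilated by $n$, are therefore arbitrarily close (for $n$ large) to the zeros of $e_k$. Now I would invoke Szeg\H{o}'s classical result: the zeros of $e_k(kw)$ cluster, as $k\to\infty$, precisely on the Szeg\H{o} curve $\{w\in\C:\lvert we^{1-w}\rvert=1,\ \lvert w\rvert\le1\}$, a Jordan curve through the point $w=1$ of the positive real axis, which therefore contains points of arbitrarily small argument; since dilation does not change arguments, $e_k$ possesses zeros whose arguments tend to $0$ as $k\to\infty$. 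Combining the two facts: given $\varepsilon>0$, first pick $k$ so large that $e_k$ has a zero $\rho$ with $0<\arg\rho<\varepsilon/2$, then pick $n\ge k$ so large that $T_{n,k}$ has a zero of argument $<\varepsilon$; that zero or its conjugate lies in $\r(n)$, so $\theta_n<\varepsilon$. Hence $\theta_n\to0$, and by the first paragraph the theorem follows.

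The delicate point is the application of the ultra-logconcave characterization to the specific sequence $(1,\dots,1,0,\dots,0)$, i.e.\ the production of convex bodies $K,E$ with $\W_i(K;E)=1$ for $0\le i\le k$ and $=0$ for $i>k$ (necessarily $\dim K=n$, $\dim E=k$): this is precisely where the main tool of the paper enters and where the geometry, not just the combinatorics of log-concave sequences, does the work. The input on the zeros of $e_k$ — in particular their behaviour near the tip $w=1$ of the Szeg\H{o} curve, which is what drives the arguments to $0$ — is classical, but has to be quoted in the sharp form used above.
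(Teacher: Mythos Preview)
Your argument is correct and follows the same overall strategy as the paper --- reduce, via the closed convex cone structure and monotonicity, to showing that the boundary angle $\theta_n$ tends to $0$; realise the truncated binomials $P_{0,k}^n=T_{n,k}$ as Steiner polynomials via the ultra-logconcave characterization; and then invoke an external result on the location of their zeros --- but the last step is carried out differently. The paper lets $k$ and $n$ go to infinity \emph{together} with $k_n/n\to 1/2$ and quotes Ostrovskii's description of the accumulation set of the zeros of $P_{0,k_n}^n$ in that regime, which happens to contain the point $1$; you instead \emph{decouple} the two limits, first sending $n\to\infty$ with $k$ fixed (Hurwitz, reducing to the truncated exponential $e_k$) and then $k\to\infty$ (Szeg\H{o}'s curve through $w=1$). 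Both routes produce roots of arbitrarily small argument. Your two-step limit rests on the older and more widely known Szeg\H{o} asymptotics and keeps the analysis at the level of fixed-degree polynomials converging coefficientwise; the paper's coupled limit is more direct once Ostrovskii's theorem is available, and avoids the extra Hurwitz step. Either way the ``delicate point'' you single out --- that the sequence $(1,\dots,1,0,\dots,0)$ really does arise as relative quermassintegrals --- is exactly Proposition~\ref{prop:truncated_steiner}, which is how the paper uses Lemma~\ref{l:charact_Steiner} here as well.
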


It is well-known that the relative quermassintegrals of two convex bodies
satisfy the inequalities
\begin{equation}\label{e:special_af2}
\W_i(K;E)^2\geq\W_{i-1}(K;E)\W_{i+1}(K;E),\;\; 1\leq i\leq n-1,
\end{equation}
which are particular cases of the Aleksandrov-Fenchel inequality; we
notice that the complete classification of the equality cases is an
unsolved problem (see e.g.~\cite[ss.~6.3, 6.6]{Sch}). By the proof of
Theorem \ref{t:R_n_inclusion} the following corollary is obtained, which
also shows that boundary-pairs have a special geometric meaning (cf.
Propositions \ref{p:cone3} and \ref{p:r(4)}).
\begin{corollary}\label{c:A-F_ineq}
For $n\geq 3$, let $(K,E)$ be a boundary-pair. Then there exists
$i\in\{1,\dots,n-1\}$ such that
\begin{equation}\label{e:equality_A-F}
\W_i(K;E)^2=\W_{i-1}(K;E)\W_{i+1}(K;E),
\end{equation}
i.e., $K,E$ are extremal sets for at least one Aleksandrov-Fenchel
inequality.
\end{corollary}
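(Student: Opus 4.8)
The plan is to derive Corollary \ref{c:A-F_ineq} directly from the proof of Theorem \ref{t:R_n_inclusion}, exploiting the connection between Steiner polynomials and ultra-logconcave sequences advertised in the abstract. Recall that the coefficient sequence $\bigl(\binom{n}{i}\W_i(K;E)\bigr)_{i=0}^n$ being ultra-logconcave is precisely the statement that, after dividing by the binomial coefficients, the sequence $\bigl(\W_i(K;E)\bigr)_i$ is logconcave, which is exactly \eqref{e:special_af2}; so the equality case \eqref{e:equality_A-F} corresponds to a ``degeneracy'' of the ultra-logconcavity at some index $i$. The point of the corollary is that a boundary-pair must be such a degenerate case.

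First I would recall how $\r(n)$ sits inside $\r(n+1)$: given $(K,E)\in\K^n\times\K^n$ with $\dim(K+E)=n$ one passes to suitable bodies in $\R^{n+1}$ (for instance, cylinders or prisms over $K$ and $E$, or the construction used in the proof of Theorem \ref{t:R_n_inclusion}) whose Steiner polynomial has, up to a harmless factor of $z$ or a real constant, the same roots; thus every root in $\r(n)$ already lies in $\r(n+1)$, and the strictness comes from producing one extra root. Now suppose $\gamma\in\bd\r(n)\setminus\R_{\leq 0}$ is a root of $\f{K}{E}$ for a boundary-pair $(K,E)$, and assume for contradiction that all the Aleksandrov-Fenchel inequalities \eqref{e:special_af2} are \emph{strict} for this pair. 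Strict ultra-logconcavity of the coefficient sequence is an open condition, so it is stable under small perturbations of the bodies. The key step is then to perturb $(K,E)$ slightly — keeping strict ultra-logconcavity, hence keeping all coefficients strictly positive and the inequalities strict — in such a way that the corresponding root moves off $\bd\r(n)$ to \emph{both} sides, i.e., one can find nearby pairs whose Steiner polynomial has a root strictly outside the closed cone $\r(n)$. This contradicts the fact that $\r(n)$ is a cone containing all such roots (Theorem \ref{t:cone}, Theorem \ref{t:R_n_closed}), because a root of a Steiner polynomial can never leave $\r(n)$.

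To make the perturbation argument precise I would use the following structure. By Theorem \ref{t:R_n_inclusion} and its proof, the cone $\r(n)$ is characterized (on the ultra-logconcave side) by an explicit inequality on the argument/modulus of $z$ coming from the extremal truncated binomial polynomials — the polynomials whose coefficient sequence is the ``most degenerate'' ultra-logconcave sequence realizing the boundary ray. A pair $(K,E)$ whose root lies on $\bd\r(n)\setminus\R_{\leq 0}$ must therefore have a coefficient sequence that is itself extremal for ultra-logconcavity, i.e., it must saturate one of the defining inequalities; but saturating an ultra-logconcavity inequality at index $i$ is exactly \eqref{e:equality_A-F}. Conversely, if no \eqref{e:special_af2} were tight, the coefficient sequence would lie in the interior of the ultra-logconcave cone, and by the continuity/openness just mentioned together with the monotonicity $\r(n)\subsetneq\r(n+1)$ one could realize a nearby polynomial — still a genuine Steiner polynomial by the sufficiency direction of the ultra-logconcave characterization — having a root outside $\r(n)$, which is impossible.

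The main obstacle I anticipate is the last implication: turning ``the coefficient sequence is not extremal for ultra-logconcavity'' into ``the root is in the interior of $\r(n)$.'' This requires the precise correspondence between the boundary of $\r(n)$ and the boundary of the set of ultra-logconcave sequences of length $n+1$, which presumably is the content of the machinery developed for Theorems \ref{t:R_n_closed} and \ref{t:R_n_inclusion}; I would need to quote that correspondence carefully, in particular the fact that the \emph{sufficiency} part of the ultra-logconcave characterization lets one realize perturbed sequences as actual relative quermassintegrals of convex bodies (so that the perturbed polynomial is genuinely of the form $\f{\tilde K}{\tilde E}$). Once that realization lemma is in hand, the openness of strict ultra-logconcavity and the continuity of roots finish the argument, and the cases $n=3,4$ of Propositions \ref{p:cone3} and \ref{p:r(4)} serve as a consistency check, since there the tight index is $i=2$ (and also $i=2,3$ when $n=4$).
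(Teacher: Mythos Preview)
Your instinct to argue by contradiction using strict ultra-logconcavity and the sufficiency direction of the characterization is correct, but the logic of the contradiction is inverted, and two of the ingredients you invoke are either unnecessary or unavailable.

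The main gap is the sentence ``one can find nearby pairs whose Steiner polynomial has a root strictly outside the closed cone $\r(n)$, which is impossible.'' This cannot be the contradiction: by definition of $\r(n)$ (cf.~\eqref{e:R_n}), every root of every Steiner polynomial lies in $\r(n)$, so no perturbation of bodies, coefficients, or roots will ever produce a Steiner root outside $\r(n)$. The correct contradiction is the mirror image: you must show that an entire neighbourhood of $\gamma$ consists of Steiner roots, so that $\gamma\in\inter\r(n)$, contradicting $\gamma\in\bd\r(n)$. Concretely, the paper perturbs the \emph{roots} directly: replace $\gamma,\overline{\gamma}$ by $\gamma+\varepsilon z,\overline{\gamma}+\varepsilon\overline{z}$ for arbitrary $|z|=1$, keep the remaining roots $\gamma_3,\dots,\gamma_r$ fixed, and observe that if all inequalities \eqref{e:special_af2} are strict then the conditions of Corollary~\ref{c:ultra-log_roots} are strict inequalities in the elementary symmetric functions; by continuity they survive the perturbation, so the perturbed list is again the root set of a Steiner polynomial, whence $\gamma+\varepsilon z\in\r(n)$ for every direction $z$. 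Perturbing the bodies $(K,E)$, as you propose, gives no control over which nearby complex numbers are reached; perturbing coefficients would require an extra surjectivity argument for the root map. Perturbing roots avoids both issues.

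Two further points. First, the ``explicit inequality on the argument of $z$ coming from the extremal truncated binomial polynomials'' that you want to quote is precisely Conjecture~\ref{co:binomial}; it is not proved in the paper and cannot be used here. Second, the monotonicity $\r(n)\subsetneq\r(n+1)$ plays no role in the proof of the corollary; only the closedness of $\r(n)$ (Theorem~\ref{t:R_n_closed}) and Corollary~\ref{c:ultra-log_roots} are needed.
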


According to Propositions \ref{p:cone3} and \ref{p:r(4)} all Steiner
polynomials for $n=3,4$ of boundary-pairs are (up to multiplication by a
constant) of the type
\begin{equation*}
\sum_{i=1}^3\binom{3}{i}\lambda^{3-i}z^i\quad \text{ and }\quad
\sum_{i=1}^4\binom{4}{i}\lambda^{4-i}z^i,
\end{equation*}
for all real $\lambda\geq 0$. Since the parameter $\lambda$ implies just a
multiplication of the roots and $\r(n)$ is a convex cone, we can say that
a representative of the Steiner polynomials of boundary-pairs is given by
a truncated binomial polynomial (setting $\lambda=1$) for $n=3,4$. We
believe that this is true in general, and so for $0\leq j<k\leq n$ we
define
\[
P_{j,k}^n(z):=\sum_{i=j}^k\binom{n}{i}z^i,
\]
the truncation of the binomial polynomial $(z+1)^n$ with indices
$j<k$.

\begin{conjecture}\label{co:binomial}
Let $n\geq 5$ and let $\gamma\in\bd\r(n)\backslash\R_{\leq 0}$. Then there
exist a truncated binomial polynomial $P_{j,k}^n(z)$, $0<j<k<n$, and
$\lambda>0$, such that $P_{j,k}^n(\lambda\gamma)=0$.
\end{conjecture}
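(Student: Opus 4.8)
The plan is to translate Conjecture~\ref{co:binomial} into a statement about \emph{extremal} ultra-logconcave polynomials and then to prove that statement by a perturbation argument. First I would use the characterization of Steiner polynomials by ultra-logconcave sequences (the tool announced in the abstract and underlying Theorems~\ref{t:R_n_closed}--\ref{t:n->infty}): by \eqref{e:special_af2} the coefficient vector $\bigl(\binom{n}{i}\W_i(K;E)\bigr)_i$ of $\f{K}{E}$ is ultra-logconcave of order $n$ with support an interval $\{j,\dots,k\}$, $j=n-\dim K$, $k=\dim E$; conversely every such polynomial is, up to a positive factor, the Steiner polynomial of a pair with $\dim(K+E)=n$ (or a limit of such, which suffices since $\r(n)$ is closed by Theorem~\ref{t:R_n_closed}). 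Hence a point $\gamma\in\bd\r(n)\setminus\R_{\le0}$ is a root of an ultra-logconcave polynomial $p(z)=\sum_{i=j}^{k}a_iz^i$ with all $a_i>0$, and it should be one that is \emph{boundary-extremal}: no ultra-logconcave polynomial whose zeros all lie in the open cone $\inter\r(n)$ has $\gamma$ as a root.

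The decisive elementary observation is: writing $b_i=a_i/\binom{n}{i}$, equality in \eqref{e:special_af2} for \emph{all} $j<i<k$ is equivalent to the $b_i$ forming a geometric progression $b_i=c\,t^{\,i}$ with $c,t>0$, i.e. to $p(z)=c\,P_{j,k}^n(tz)$; then $p(\gamma)=0$ yields $P_{j,k}^n(t\gamma)=0$ and the conjecture holds with $\lambda=t$. A singleton support gives the root $0\in\R_{\le0}$, and the full support $\{0,\dots,n\}$ with all inequalities tight gives $c(z+t)^n$ with root $-t\in\R_{\le0}$; so a non-real boundary root forces $j<k$ and $(j,k)\neq(0,n)$. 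Moreover, for $n\ge5$ the value $j=0$ is excluded via \eqref{e:f_KE_f_EK} together with the fact that $P_{1,n}^n(z)=(1+z)^n-1$ has its non-real zeros on the circle $|z+1|=1$ at arguments bounded away from $0$, whereas by Theorem~\ref{t:n->infty} the outer ray of $\r(n)$ has argument tending to $0^+$; thus $P_{1,n}^n$ cannot realise $\bd\r(n)\setminus\R_{\le0}$ once $n$ is large, which is consistent with the asserted range $0<j<k<n$. In summary, the conjecture is equivalent to: \emph{every boundary-extremal ultra-logconcave polynomial has all of its support-interior Aleksandrov--Fenchel inequalities tight}. By Corollary~\ref{c:A-F_ineq} at least \emph{one} of them is tight, and the task is to bootstrap from one to all.

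The natural attack is variational. Assume for contradiction that some inequality \eqref{e:special_af2} with $j<i_0<k$ is strict for a boundary-extremal $p$ with $p(\gamma)=0$ (treating the multiple-root case $p'(\gamma)=0$ separately). Perturb $a_{i_0-1},a_{i_0},a_{i_0+1}$ within the slack of that strict inequality to obtain an analytic family $p_s$ of ultra-logconcave polynomials with the same support and a root $\gamma_s$, $\gamma_0=\gamma$, $\dot\gamma_0=-\dot p_0(\gamma)/p'(\gamma)$; one wants to choose the perturbation so that $\gamma_s$ moves strictly to the interior side of the ray $\bd\r(n)\setminus\R_{\le0}$, contradicting $\gamma\in\bd\r(n)$ — here convexity of $\r(n)$ (Theorem~\ref{t:cone}) is what makes ``interior side'' local and well defined.

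The main obstacle is exactly this last implication. The set of ultra-logconcave sequences of order $n$ is \emph{not} convex, so one must argue carefully that the admissible perturbation directions at $p$ span enough of coefficient space while keeping the support fixed (i.e. not disturbing the endpoint inequalities at $j$ and $k$); and, more seriously, one must link this local freedom in coefficient space with the global position of $\gamma$ relative to $\bd\r(n)$, since a local root perturbation is worthless unless one already controls which ultra-logconcave polynomials actually occur as Steiner polynomials. I expect that closing this gap needs either a global compactness argument ranging over all admissible supports $j,k$, or a direct analysis of the zero sets of the $P_{j,k}^n$ showing that the union of their positive dilates is closed and sweeps out precisely the outer boundary ray of $\r(n)$; this is, in our view, where the genuine difficulty resides, and the reason the statement is only a conjecture.
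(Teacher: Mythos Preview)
The statement you are addressing is \emph{Conjecture}~\ref{co:binomial}, and the paper does not contain a proof of it. The authors present it as an open problem supported by the low-dimensional cases (Propositions~\ref{p:cone3} and~\ref{p:r(4)}), by numerical evidence (Remark~\ref{r:computations} and Table~\ref{ta:computations}), and by the necessary condition of Corollary~\ref{c:A-F_ineq}. There is therefore no ``paper's own proof'' to compare against; your text is an outline of a possible attack, and you yourself correctly flag the decisive gap at the end.

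That said, two points in your outline deserve correction. First, your reformulation ``every boundary-extremal ultra-logconcave polynomial has all of its support-interior Aleksandrov--Fenchel inequalities tight'' is indeed equivalent to the conjecture, and the observation that full tightness forces $p(z)=c\,P_{j,k}^n(tz)$ is the right elementary core. But your argument for excluding $j=0$ (and, by symmetry, $k=n$) is not valid: you invoke Theorem~\ref{t:n->infty} to say that ``the outer ray of $\r(n)$ has argument tending to $0^+$'', yet Theorem~\ref{t:n->infty} is an asymptotic statement about the union over all $n$ and says nothing about the boundary of $\r(n)$ for a \emph{fixed} $n\ge 5$. The paper's actual reason for believing $0<j<k<n$ is different: in Section~\ref{s:boundary} it is observed that $P_{1,n}^n(z)$ (the case $j=1$, $k=n$, arising from cap-bodies) has all its roots in the interior of the cone for $5\le n\le 9$ by direct comparison with $P_{1,4}^5$, and in the closed left half-plane for all $n$, hence strictly inside $\r(n)$ once $n\ge 10$ by Proposition~\ref{p:cones_10_11}. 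This, together with the symmetry \eqref{e:f_KE_f_EK}, is what rules out $k=n$ and $j=0$ in the numerical table, not any appeal to Theorem~\ref{t:n->infty}.

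Second, the variational step is where your proposal stalls, and you diagnose the obstruction honestly: a strict inequality at a single index $i_0$ gives local freedom in the coefficients, but translating that into a movement of the root $\gamma$ \emph{towards the interior of} $\r(n)$ requires knowing the tangent direction of $\bd\r(n)$ at $\gamma$, which is precisely the unknown. The paper's proof of Corollary~\ref{c:A-F_ineq} succeeds with exactly this perturbation idea because it only needs to move $\gamma$ in \emph{some} direction (a full disc), not in a prescribed one; upgrading ``at least one tight'' to ``all tight'' would need either control of the direction $\dot\gamma_0$ relative to the boundary ray, or the alternative global route you sketch at the end. Neither is carried out here or in the paper, which is why the statement remains a conjecture.
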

Notice that the conjecture would directly imply that if $(K,E)$ is a
bound\-ary-pair, then $K,E$ are extremal sets for exactly $n-3$
Aleksandrov-Fenchel inequalities (cf. Corollary \ref{c:A-F_ineq}).

\medskip

The property that all roots of $3$-dimensional Steiner polynomials lie in
the left half-plane was part of a conjecture posed by Sangwine-Yager
\cite{SY88n} (cf.~e.g., \cite[p.\;65]{SY93}), motivated by a problem of
Teissier \cite{Te82}. There it was claimed that Steiner polynomials
satisfy $\r(n)\subseteq\bigl\{z\in \C^+:\Real(z)\leq 0\bigr\}$. This
inclusion is known to be true for dimensions $\leq 9$. In fact, in
\cite[Proposi\-tion~1.1]{HHC2} it was shown that
\begin{equation}\label{e:stability}
\r(n)\subseteq\bigl\{z\in\C^+:\Real(z)<0\bigr\}\cup\{0\}\quad\text{ for
}\; n\leq 9,
\end{equation}
i.e., all non-trivial roots are in the open left half-plane. We will call
this property ``weak'' stability. In \cite{HHC} the conjecture was shown
to be false in dimensions $\geq 12$ for a special family of bodies (see
also \cite{Kat09} for another family of high dimensional convex bodies
with this property). By looking at the roots of particular truncated
polynomials, we get rid of the gap, showing that for $n=10,11$ Steiner
polynomials are also not weakly stable.

\begin{proposition}\label{p:cones_10_11}
Steiner polynomials are weakly stable polynomials, i.e.,
$\r(n)\subseteq\bigl\{z\in\C^+:\Real(z)<0\bigr\}\cup\{0\}$, if and only if
$n\leq 9$.
\end{proposition}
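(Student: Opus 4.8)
The implication ``$n\le 9\Rightarrow$ weakly stable'' is exactly \eqref{e:stability}, so only the reverse implication needs work, and since $\r(n)\subseteq\r(n+1)$ by Theorem~\ref{t:R_n_inclusion}, it suffices to produce one non-trivial point of $\r(10)$ lying in the closed right half-plane; the case $n=11$, and in fact every $n\ge 10$, is then immediate (one may of course also exhibit an explicit truncated polynomial for $n=11$ separately).

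The plan is to realize a well-chosen truncated binomial $P_{j,k}^{10}$ as a limit of genuine Steiner polynomials. First note that dividing the coefficient of $z^i$ in $P_{j,k}^{10}$ by $\binom{10}{i}$ produces the indicator sequence of $[j,k]$, which is log-concave; hence the coefficient sequence of $P_{j,k}^{10}$ is ultra-logconcave with respect to $n=10$. Replacing its vanishing entries by entries decaying geometrically towards the two ends --- concretely, taking the coefficient of $z^i$ to be $\binom{10}{i}\,w_i$ with $(w_i)_{i=0}^{10}=(\varepsilon^2,\varepsilon,1,1,1,1,1,1,\varepsilon,\varepsilon^2,\varepsilon^3)$ in the case $(j,k)=(2,7)$ --- one obtains for every $0<\varepsilon<1$ a strictly positive log-concave sequence. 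By the characterization of Steiner polynomials via ultra-logconcave sequences (the essential tool of the paper), the corresponding polynomial $p_\varepsilon$ of degree $10$ with positive constant term is the Steiner polynomial of a pair of convex bodies with $\dim(K+E)=10$, so all roots of $p_\varepsilon$ in $\C^+$ lie in $\r(10)$. As $\varepsilon\to 0$ one has $p_\varepsilon\to P_{j,k}^{10}$ coefficientwise, so all but $10-\deg P_{j,k}^{10}$ of the roots converge to the roots of $P_{j,k}^{10}$ while the remaining ones escape to infinity; since $\r(10)$ is closed by Theorem~\ref{t:R_n_closed}, every root of $P_{j,k}^{10}$ in $\C^+$ therefore belongs to $\r(10)$.

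It then remains to choose $(j,k)$ so that $P_{j,k}^{10}$ has a root off the open left half-plane. Here I would take $(j,k)=(2,7)$, so that
\[
P_{2,7}^{10}(z)=z^{2}\bigl(120z^{5}+210z^{4}+252z^{3}+210z^{2}+120z+45\bigr),
\]
and apply the Routh--Hurwitz criterion to the degree-$5$ factor: its Routh array has first column $120,\ 210,\ 132,\ 60,\ -33/7,\ 45$, with two sign changes, so the factor has exactly two roots with positive real part. As it has positive coefficients (and no purely imaginary root, the last two array entries being non-zero), these form a conjugate pair $x\pm y\,\im$ with $x>0$, $y\neq 0$, and the one lying in $\C^+$ certifies that $\r(10)$, hence $\r(n)$ for all $n\ge 10$, is not weakly stable. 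Combined with \eqref{e:stability} this proves the equivalence.

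The only real obstacle is this final choice together with the attached computation: there is no ``canonical'' truncation that works, and, up to the reciprocity $P_{j,k}^{n}\leftrightarrow P_{n-k,n-j}^{n}$ (which preserves weak stability), one must search for an \emph{asymmetric} pair $(j,k)$ on which the Routh--Hurwitz test fails --- the symmetric truncations $P_{1,9}^{10}$, $P_{2,8}^{10}$, $P_{3,7}^{10}$, for instance, all turn out to be weakly stable, so some asymmetry is genuinely needed. By contrast, once the ultra-logconcave characterization is available the realization step is routine, Theorem~\ref{t:R_n_closed} serving only to pass from the full-dimensional approximants $p_\varepsilon$ to the degenerate limit $P_{j,k}^{10}$.
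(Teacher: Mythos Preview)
Your argument is correct, and the Routh--Hurwitz verification for $P_{2,7}^{10}$ is a nice touch: it gives a fully rigorous certificate, whereas the paper simply points to the numerical entry for $P_{3,8}^{10}$ in Table~\ref{ta:computations}. Your use of the monotonicity $\r(n)\subseteq\r(n+1)$ to reduce everything to $n=10$ is also a clean shortcut that the paper does not invoke (it handles $n=10$ and $n=11$ separately by citing the table, and $n\ge 12$ by an earlier reference).

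However, the entire approximation step with the polynomials $p_\varepsilon$ and the appeal to Theorem~\ref{t:R_n_closed} is unnecessary. You already observed that the coefficient sequence of $P_{j,k}^{10}$ is ultra-logconcave; Lemma~\ref{l:charact_Steiner} is stated precisely so as to cover sequences that vanish outside a window $[n-s,r]$, and Proposition~\ref{prop:truncated_steiner} records the immediate consequence that every truncated binomial $P_{j,k}^n$ \emph{is} a Steiner polynomial of a pair $K,E$ with $\dim K=n-j$, $\dim E=k$. So $P_{2,7}^{10}$ is a genuine Steiner polynomial from the outset, and its roots in $\C^+$ lie in $\r(10)$ by definition---no limit, no closedness needed. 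The paper's proof of the proposition is accordingly just two lines: cite \eqref{e:stability} for $n\le 9$, cite the earlier counterexamples for $n\ge 12$, and read off a root with positive real part of $P_{3,8}^{10}$ and $P_{3,8}^{11}$ from the table. Your route is essentially the same in spirit (truncated binomials as witnesses), just with a different truncation and a superfluous realization step.
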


Figure \ref{f:roots_cones} depicts the above results, and
for further information on the roots of Steiner polynomials in the context
of Teissier's problem  we refer to \cite{HHC,HCS,HCS2,Jet,Kat09}.

\begin{figure}[h]
\begin{center}
\includegraphics[width=8.6cm]{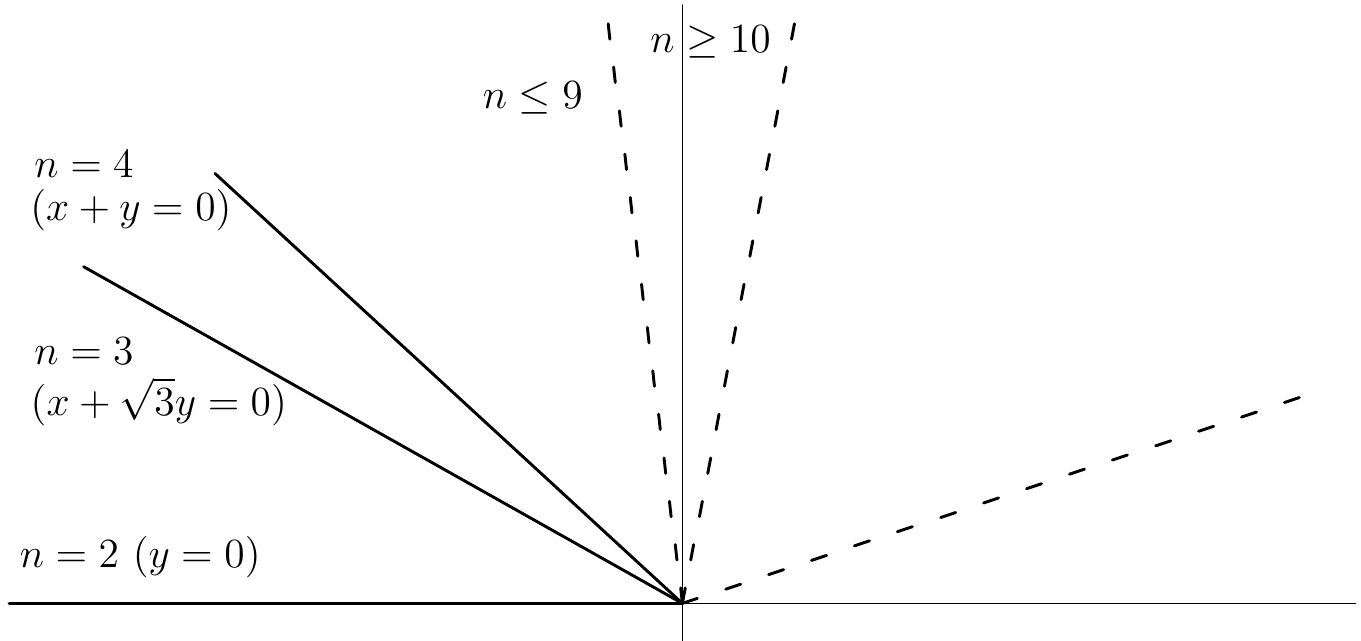}
\caption{Structure of the cones $\r(n)$.}\label{f:roots_cones}
\end{center}
\end{figure}

An essential tool for the proofs of the above results is a
characterization of Steiner polynomials via ultra-logconcave sequences
(Lemma \ref{l:charact_Steiner}), which we show in
Section~\ref{s:preliminary}. There we also discuss some additional
applications of this characterization. In Sections~\ref{s:boundary} and
\ref{s:monotonicity} we present the proofs of the main results, namely
Theorems~\ref{t:R_n_closed}, \ref{t:R_n_inclusion} and \ref{t:n->infty},
as well as some consequences. Finally, in Section~\ref{s:apendix} we
characterize the $4$-dimensional cone $\r(4)$.

\section{Ultra-logconcave sequences}\label{s:preliminary}

A sequence of non-negative real numbers $a_0,\dots,a_n$ is said to be {\it
ultra-logconcave} if
\begin{equation}\label{e:ultra-log}
c_{i,n}\, a_i^2 \geq a_{i-1}\,a_{i+1}\quad\text{ with }\;
c_{i,n}=\frac{\binom{n}{i-1}\binom{n}{i+1}}{\binom{n}{i}^2}=\frac{i}{i+1}\frac{n-i}{n-i+1},
\end{equation}
$1\leq i\leq n-1$. For further information on ultra-logconcave sequences
we refer to \cite{Gu,Li} and the references inside. This property for real
numbers allows to characterize Steiner polynomials.
\begin{lemma}\label{l:charact_Steiner}
A real polynomial $\sum_{i=0}^na_iz^i$, $a_i\geq 0$, is a Steiner
polynomial $\f{K}{E}(z)$ for a pair of convex bodies $K,E\in\K^n$, with
$\dim E=r$, $\dim K=s$, $\dim(K+E)=n$, if and only if
\begin{enumerate}
\item[i)] $a_i>0$ for all $n-s\leq i\leq r$, and $a_i=0$ otherwise, and
\item[ii)] the sequence $a_0,\dots,a_n$ is ultra-logconcave, i.e.,
\[
c_{i,n}\, a_i^2 \geq a_{i-1}\,a_{i+1}\;\text{ for }1\leq i\leq n-1.
\]
\end{enumerate}
\end{lemma}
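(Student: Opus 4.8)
The plan is to prove the two implications separately. For the \emph{necessity} direction, suppose $\sum_{i=0}^n a_i z^i = \f{K}{E}(z)$ for convex bodies $K,E$ with the stated dimensions. Then $a_i = \binom{n}{i}\W_i(K;E)$, and condition~i) is exactly the known positivity/vanishing criterion for relative quermassintegrals quoted in the introduction (\cite[Theorem~5.1.7]{Sch}): $\W_i(K;E)>0$ precisely when $n-\dim K \le i \le \dim E$. For condition~ii), I would substitute $a_i = \binom{n}{i}\W_i(K;E)$ into the ultra-logconcavity inequality and check that the binomial factors are arranged so that it becomes exactly the Aleksandrov--Fenchel inequality \eqref{e:special_af2}: indeed $c_{i,n}\binom{n}{i}^2 = \binom{n}{i-1}\binom{n}{i+1}$ by the very definition of $c_{i,n}$, so $c_{i,n}a_i^2 \ge a_{i-1}a_{i+1}$ is literally $\W_i(K;E)^2 \ge \W_{i-1}(K;E)\W_{i+1}(K;E)$. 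This direction is essentially a bookkeeping check.

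The \emph{sufficiency} direction is the substantive part. Given a sequence $a_0,\dots,a_n$ satisfying i) and ii), set $w_i := a_i/\binom{n}{i} \ge 0$; condition~ii) says exactly that $w_i^2 \ge w_{i-1}w_{i+1}$ for $1\le i\le n-1$, i.e.\ the sequence $(w_i)$ is logconcave (on its support, which by i) is the interval $n-s\le i\le r$). I must produce convex bodies $K,E\in\K^n$ with $\dim K = s$, $\dim E = r$, $\dim(K+E)=n$, and $\W_i(K;E) = w_i$ for all $i$. The natural approach is a dimension-induction or a direct construction using a flag of bodies: one expects to take $K$ and $E$ to be suitable (possibly lower-dimensional) boxes or, more flexibly, bodies of the form $K = \conv\{F_K, \text{pt}\}$, $E = \conv\{F_E,\text{pt}\}$ relative to orthogonal coordinate subspaces, and to compute $\W_i$ for such configurations explicitly. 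A cleaner route: it is classical (essentially due to results on which sequences arise as mixed-volume sequences in a fixed "generalized Minkowski" setting) that a logconcave sequence supported on an interval, with the normalization forced by $\W_0 = \vol K$, $\W_n=\vol E$, is realized by segments/balls along coordinate axes; I would look for the realizing pair among bodies of the form $K = \sum_{j} \alpha_j [0,e_j]$ over a coordinate subspace and $E$ a ball or box over the complementary part, tuning the parameters $\alpha_j$ so the elementary-symmetric-function expressions for $\W_i$ match a prescribed logconcave sequence. The main obstacle is precisely this: showing that \emph{every} logconcave sequence on an interval (not merely those with some extra positivity) is attained, since logconcavity of $(w_i)$ alone need not make it a sequence of elementary symmetric functions of nonnegative reals — so one likely needs the extra freedom of mixing a box with a ball (or iterating a one-step "cap/cylinder" operation) and an inductive argument on $n$ to get the full range. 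I would organize the induction so that the inductive step adds one dimension by a Minkowski sum with a segment or a $1$-dimensional ball, track how the $\W_i$ transform (a discrete convolution-type recursion), and invoke the fact that logconcave sequences are closed under such operations and, conversely, can be "peeled" down to the base case.

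One technical point to handle carefully at the end: the degenerate-dimension constraints. Condition~i) pins down $\dim K = s$ via $n - s = \min\{i: a_i>0\}$ and $\dim E = r$ via $r = \max\{i: a_i > 0\}$, while $\dim(K+E)=n$ must hold by construction (it will, since the realizing bodies jointly span $\R^n$: $K$ spans an $s$-dimensional coordinate subspace containing the last $s$ axes, $E$ spans an $r$-dimensional one containing the first $r$, and $s + r \ge n$ because the support interval is nonempty). I would close by remarking that the realization is of course highly non-unique, and that the lemma translates all subsequent questions about roots of $\f{K}{E}$ into questions about real logconcave (equivalently, via the $\binom{n}{i}$ weights, ultra-logconcave) coefficient sequences, which is the form used in Sections~\ref{s:boundary} and~\ref{s:monotonicity}.
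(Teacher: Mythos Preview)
Your necessity direction is correct and matches the paper exactly: it is pure bookkeeping, reducing i) to \cite[Theorem~5.1.7]{Sch} and ii) to the Aleksandrov--Fenchel inequality \eqref{e:special_af2}.

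For sufficiency you have the right setup (pass to $w_i=a_i/\binom{n}{i}$ and use logconcavity of $(w_i)$), but your proposed construction has a genuine gap that you yourself flag without resolving. If you take $K$ and $E$ to be boxes $\sum_j\alpha_j[0,\e_j]$, the resulting quermassintegrals are elementary symmetric functions of the $\alpha_j$, and as you note, an arbitrary logconcave sequence need \emph{not} be a sequence of elementary symmetric functions of nonnegative reals. Your suggested fixes (mixing in a ball, an inductive ``peeling'' step) are not spelled out, and it is not clear any of them closes the gap without substantial further work; in particular, convolving with a segment or ball gives a specific one-parameter family of transformations of the $\W_i$, and showing that iterating these reaches every logconcave target is a nontrivial inverse problem.

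The paper sidesteps this entirely by using \emph{simplices} rather than boxes. With
\[
K=\conv\{0,\e_{n-s+1},\dots,\e_n\},\qquad E=\conv\{0,\alpha_1\e_1,\dots,\alpha_r\e_r\}
\]
(with $\alpha_1\ge\dots\ge\alpha_r>0$), one shows via an explicit dissection of $K+E$ that $\W_i(K;E)=(1/n!)\,\alpha_1\cdots\alpha_i$ for $n-s\le i\le r$. These are \emph{products}, not elementary symmetric functions, so the inversion is trivial: set $\alpha_i=w_i/w_{i-1}$ for $n-s+1\le i\le r$ (and $\alpha_i=(n!\,w_{n-s})^{1/(n-s)}$ for $i\le n-s$). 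Logconcavity of $(w_i)$ is exactly what guarantees $\alpha_i\ge\alpha_{i+1}$, which is the only constraint needed for the dissection to be valid. This is the missing idea in your plan: choose bodies for which the quermassintegrals have multiplicative, rather than symmetric-function, dependence on the parameters.
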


This result essentially follows from a theorem of Shephard
(\cite[Theorem~4]{She}, see also \cite[p.~333]{Sch}, and for the
$2$-dimensional case see \cite{He}), which states that any given set of
$n+1$ non-negative real numbers $\W_0,\dots,\W_n\geq 0$ satisfying the
inequalities $\W_i\W_j\geq\W_{i-1}\W_{j+1}$, $1\leq i\leq j\leq n-1$,
arises as the set of relative quermassintegrals of two convex bodies.
There, an explicit construction of the two convex bodies is given in the
case when all $\W_i>0$, whereas the general case is obtained by a rather
non-constructive topological argument. Here we reduce the number of
involved inequalities, and extend the construction of the two convex
bodies to $\W_i\geq 0$.

\begin{proof}[Proof of Lemma \ref{l:charact_Steiner}]
If $\sum_{i=0}^na_iz^i$ is the Steiner polynomial of two convex bodies
$K,E\in\K^n$, then $a_i=\binom{n}{i}\W_i(K;E)$, and i), ii) are well-known
properties of quermassintegrals. For i) see \cite[Theorem 5.1.7]{Sch} and
ii) is \eqref{e:special_af2}.

Now we assume i) and ii). If $s=0$ or $r=0$ then both $a_nz^n$ and $a_0$
are obviously Steiner polynomials, and so we may assume $r,s\geq 1$.
Setting $\W_i=a_i/\binom{n}{i}$, we directly get that
\begin{equation}\label{e:cond_Wi}
\begin{split}
 & \W_i>0 \text{ for all } n-s\leq i\leq r \text{ and } \W_i=0 \text{ otherwise, and}\\
 & \W_i^2\geq\W_{i-1}\W_{i+1},\; 1\leq i\leq n-1.
\end{split}
\end{equation}
In the rest of the proof we will construct two convex bodies $K,E$, with
$\dim K=s$, $\dim E=r$, $\dim(K+E)=n$ and $\W_i=\W_i(K;E)$, and so
$\sum_{i=0}^na_iz^i=\f{K}{E}(z)$. To this end we extend the construction
in \cite{She} to handle lower dimensional bodies as well and, as in
\cite{She}, the sets $K,E$ will be simplices.

Let $\e_i$, $1\leq i\leq n$, denote the $i$-th canonical unit vector, and
let $q_i=\alpha_i\e_i$, where $\alpha_i>0$ for $i=1,\dots,r$, $\alpha_i=0$
for $i=r+1,\dots,n$, and $\alpha_i\geq\alpha_{i+1}$ if
$i=n-s+1,\dots,r-1$. These numbers $\alpha_i$'s will be fixed at the end
of the proof. Let $K$, $E$ be the, respectively, $s$- and $r$-dimensional
simplices
\begin{equation}\label{e:def_S,T}
K=\conv\{0,\e_{n-s+1},\dots,\e_n\},\quad E=\conv\{0,q_1,\dots,q_r\}.
\end{equation}
Then $K+E=\conv\{0,\e_i,q_j,\e_i+q_j:n-s+1\leq i\leq n,1\leq j\leq r\}$,
but since $\alpha_j\geq\alpha_{j+1}$ for $j=n-s+1,\dots,r-1$, the points
$\e_i+q_j\in\conv\{0,\e_i+q_i,\e_j+q_j\}$ if $i<j$, and thus
\begin{equation}\label{e:S+T}
\begin{split}
K+E=\conv\{0,\e_i+q_j & :j\leq i,n-s+1\leq i\leq n,1\leq j\leq r,\\
             \e_i,q_j & :r+1\leq i\leq n,1\leq j\leq n-s\}.
\end{split}
\end{equation}
Now for $n-s+1\leq m\leq r+1$, let
\[
K_m=\conv\{0,\e_m,\dots,\e_n\},\quad E_m=\conv\{q_1,\dots,q_m\},
\]
with $K_{n+1}=\{0\}$, $E_{n+1}=\conv\{0,q_1,\dots,q_n\}$; notice that
$q_{r+1}=0$. In the following we will show by induction on the dimension
that $K+E$ is the interior-disjoint union of the sets $K_m+E_m$, i.e.,
\begin{equation}\label{e:disection}
K+E=\qquad\bigdotcup[4.7ex]_{m=n-s+1}^{r+1} (K_m+E_m),
\end{equation}
where $\dotcup$ denotes interior-disjoint union.

For $n=1$ the assertion is trivial. So let $n\geq 2$, and let
\[
\overline{K}=\conv\{0,\e_{n-s+1},\dots,\e_{n-1}\},\quad
\overline{E}=\left\{\begin{array}{ll}
 \conv\{0,q_1,\dots,q_r\} & \text{ if } r<n,\\[1mm]
 \conv\{0,q_1,\dots,q_{n-1}\} & \text{ if } r=n,
\end{array}\right.
\]
with $\overline{K}=\{0\}$ if $s=1$. Notice that in both cases,
$\dim\overline{K}+\dim\overline{E}=s-1+r\geq n-1$. Similarly as before we
consider, for $n-s+1\leq m\leq r+1$ (if $r<n$) or $n-s+1\leq m<n$ (if
$r=n$),
\[
\overline{K}_m=\conv\{0,\e_m,\dots,\e_{n-1}\},\quad
\overline{E}_m=\conv\{q_1,\dots,q_m\},
\]
where $\overline{K}_n=\{0\}$ and
$\overline{E}_n=\conv\{0,q_1,\dots,q_{n-1}\}$ (also for $m=r=n$). By
induction hypothesis,
\[
\overline{K}+\overline{E}=\left\{\begin{array}{ll}
\bigdotcup[1.3ex]_{m=n-s+1}^{r+1}(\overline{K}_m+\overline{E}_m) & \text{if } r<n\\[2mm]
\bigdotcup[1.3ex]_{m=n-s+1}^n(\overline{K}_m+\overline{E}_m) & \text{if }
r=n
\end{array}\right\}=:\qquad\bigdotcup[4.7ex]_{m=n-s+1}^{r+1,n}(\overline{K}_m+\overline{E}_m),
\]
and taking the orthogonal projection $\pi_n$ onto the coordinate
hyperplane $\e_n=0$ and the restriction $\pi:=(\pi_n)|_{K+E}$, we get
\[
K+E=\pi^{-1}\bigl(\overline{K}+\overline{E}\bigr)=
\qquad\bigdotcup[4.7ex]_{m=n-s+1}^{r+1,n}\pi^{-1}\bigl(\overline{K}_m+\overline{E}_m\bigr).
\]
It is easy to see that
$\pi^{-1}\bigl(\overline{K}_m+\overline{E}_m\bigr)=K_m+E_m$ for
$m=n-s+1,\dots,r+1$ when $r<n$ and $m=n-s+1,\dots,n-1$ when $r=n$. So we
get the required union for $K+E$ in $r+s-n+1$ interior-disjoint parts (cf.
\eqref{e:disection}) when $r<n$. Finally, if $r=n$,
\[
\pi^{-1}\bigl(\overline{K}_n+\overline{E}_n\bigr)=\conv\{0,q_j,q_j+\e_n:
1\leq j\leq n\}=(K_{n+1}+E_{n+1})\dotcup(K_n+E_n),
\]
providing the $s+1$ interior-disjoint parts in \eqref{e:disection} when
$r=n$.

Next, based on relation \eqref{e:disection}, we can compute the volume of
the polytope $K+E$. Since $(\aff K_m)\cap(\aff E_m)=\{q_m\}$ we get, for
all $n-s+1\leq m\leq r+1$ ($m\neq n+1$), that
\[
\begin{split}
\vol(K_m+E_m) & =\vol\left(K_m+\bigl(E_m|(\lin K_m)^{\bot}\bigr)\right)\\
 & =\vol_{n-m+1}(K_m)\vol_{m-1}\bigl(\conv\{0,q_1,\dots,q_{m-1}\}\bigr)\\
 & =\frac{1}{(n-m+1)!}\frac{\alpha_1\dots\alpha_{m-1}}{(m-1)!}
    =\frac{1}{n!}\binom{n}{m-1}\alpha_1\dots\alpha_{m-1};
\end{split}
\]
here we use $\vol_i$ to denote the $i$-dimensional volume in $\R^i$,
$L^{\bot}$ for the orthogonal complement of a linear subspace $L$ and
$M|L$ for the orthogonal projection of $M\varsubsetneq\R^n$ onto $L$.
Observe that if $r=n$ then
$\vol(K_{n+1}+E_{n+1})=\vol(E_{n+1})=(1/n!)\,\alpha_1\dots\alpha_n$. Thus,
by \eqref{e:disection},
\[
\vol(K+E)=\sum_{m=n-s+1}^{r+1}\vol(K_m+E_m)=\sum_{i=n-s}^r\binom{n}{i}\frac{1}{n!}\,\alpha_1\dots\alpha_i,
\]
where, if $s=n$, the first summand ($i=0$) is just $1/n!$. This says that
$\W_i(K;E)=(1/n!)\,\alpha_1\dots\alpha_i$ for $n-s\leq i\leq r$, and
$\W_i(K;E)=0$ otherwise.

Now we go back to our given sequence of real numbers $\W_0,\dots,\W_n\geq
0$ satisfying \eqref{e:cond_Wi}. Let
\[
\alpha_i=\left\{\begin{array}{ll}
 (n!\W_{n-s})^{1/(n-s)} & \text{for }\; i=1,\dots,n-s,\\
 \W_i/\W_{i-1} & \text{for }\; i=n-s+1,\dots,r,\\
 0 & \text{for }\; i=r+1,\dots,n.
\end{array}\right.
\]
Since $\W_i^2\geq\W_{i-1}\W_{i+1}$ we have $\alpha_i\geq\alpha_{i+1}$ for
$n-s+1\leq i\leq r$, and taking $K,E$ as defined in \eqref{e:def_S,T} we
get, for all $i=n-s,\dots,r$,
\[
\W_i(K;E)=\frac{1}{n!}\,\alpha_1\dots\alpha_{n-s}\alpha_{n-s+1}\dots\alpha_i
=\frac{1}{n!}(n!\W_{n-s})\frac{\W_i}{\W_{n-s}}=\W_i,
\]
and $\W_i=0$ otherwise.
\end{proof}

For complex numbers $z_1,\dots,z_r\in\C$ let
\begin{equation*}
\sy{i}{z_1,\dots,z_r}=\sum_{\substack{J\subseteq\{1,\dots,r\}\\\#J=i}}
\prod_{j\in J}z_j
\end{equation*}
denote the $i$-th elementary symmetric function of $z_1,\dots,z_r$, $1\leq
i\leq r$. In addition we set $\sy{0}{z_1,\dots,z_r}=1$. Using this
notation the following corollary is an immediate consequence of Lemma
\ref{l:charact_Steiner}.

\begin{corollary}\label{c:ultra-log_roots}
The complex numbers $\gamma_1,\dots,\gamma_r\in\C$ are the roots of a
Steiner polynomial $\f{K}{E}(z)$ of degree $r\leq n$, with $K,E\in\K^n$,
$\dim E=r$, $\dim K=s$, $\dim(K+E)=n$, if and only if
\begin{equation}\label{eq:condition}
\begin{split}
\text{\rm i)}\quad (-1)^i & \sy{i}{\gamma_1,\dots,\gamma_r}>0, \quad 0\leq i\leq r+s-n,\\
 & \sy{i}{\gamma_1,\dots,\gamma_r}=0, \quad r+s-n+1\leq i\leq r,\\
\text{\rm ii)}\;\; c_{r-i,n}\, & \sy{i}{\gamma_1,\dots,\gamma_r}^2\geq
\sy{i-1}{\gamma_1,\dots,\gamma_r}\sy{i+1}{\gamma_1,\dots,\gamma_r},\;
1\leq i\leq r-1.
\end{split}
\end{equation}
\end{corollary}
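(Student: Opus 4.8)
The plan is to read off the coefficients of $\f{K}{E}(z)$ from its roots via Vieta's formulas and then substitute into Lemma \ref{l:charact_Steiner}. Since $\W_r(K;E)=\vol(E)>0$, the polynomial $\f{K}{E}(z)$ has degree exactly $r$, so with $a_r=\binom{n}{r}\W_r(K;E)>0$ we may write
\[
\f{K}{E}(z)=a_r\prod_{j=1}^{r}(z-\gamma_j)=a_r\sum_{i=0}^{r}(-1)^{r-i}\sy{r-i}{\gamma_1,\dots,\gamma_r}\,z^i ,
\]
whence $a_i=a_r(-1)^{r-i}\sy{r-i}{\gamma_1,\dots,\gamma_r}$ for $0\leq i\leq r$ and $a_i=0$ for $r<i\leq n$. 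Conversely, if the $\gamma_j$ satisfy i), then all $\sy{i}{\gamma_1,\dots,\gamma_r}$ are real, so the monic polynomial $p(z)=\prod_{j=1}^{r}(z-\gamma_j)$ has real coefficients $a_i=(-1)^{r-i}\sy{r-i}{\gamma_1,\dots,\gamma_r}$; as both conditions of Lemma \ref{l:charact_Steiner} are invariant under scaling the coefficient vector by a positive constant, it suffices to work with this normalization.

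First I would match condition i). Because $a_r>0$, the requirement ``$a_i>0$ for $n-s\leq i\leq r$ and $a_i=0$ otherwise'' becomes, after the substitution $j=r-i$, precisely ``$(-1)^{j}\sy{j}{\gamma_1,\dots,\gamma_r}>0$ for $0\leq j\leq r+s-n$ and $\sy{j}{\gamma_1,\dots,\gamma_r}=0$ for $r+s-n+1\leq j\leq r$'', i.e. Corollary i) (renaming $j$ back to $i$); note this also forces the symmetric functions to be real and encodes that exactly $n-s$ of the roots vanish (since $\sy{r+s-n}{\gamma_1,\dots,\gamma_r}\neq0$ while the higher ones vanish).

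Next I would match condition ii). From $a_i=a_r(-1)^{r-i}\sy{r-i}{\gamma_1,\dots,\gamma_r}$ one gets $a_i^2=a_r^2\,\sy{r-i}{\gamma_1,\dots,\gamma_r}^2$ and, the signs cancelling, $a_{i-1}a_{i+1}=a_r^2\,\sy{r-i-1}{\gamma_1,\dots,\gamma_r}\sy{r-i+1}{\gamma_1,\dots,\gamma_r}$, so the ultra-logconcavity inequality $c_{i,n}\,a_i^2\geq a_{i-1}a_{i+1}$ turns into $c_{i,n}\,\sy{r-i}{\gamma_1,\dots,\gamma_r}^2\geq\sy{r-i-1}{\gamma_1,\dots,\gamma_r}\sy{r-i+1}{\gamma_1,\dots,\gamma_r}$; reindexing by $k=r-i$ yields exactly Corollary ii). It remains to reconcile the index ranges: Lemma ii) runs over $1\leq i\leq n-1$, i.e. $r-n+1\leq k\leq r-1$, but for $k\leq0$ the corresponding inequality only involves the coefficients $a_{r+1}=a_{r+2}=\dots=0$ on its right-hand side (or has a strictly positive left-hand side), hence holds trivially; thus restricting to $1\leq k\leq r-1$ loses nothing, and conversely Corollary ii) together with these trivial cases recovers all of Lemma ii).

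Assembling the two implications of Lemma \ref{l:charact_Steiner} then gives both directions: in the ``only if'' direction the coefficients $a_i$ of $\f{K}{E}$ satisfy Lemma i), ii), hence the $\gamma_j$ satisfy Corollary i), ii); in the ``if'' direction the coefficients of $p(z)$ are nonnegative reals satisfying Lemma i), ii), so $p=\f{K}{E}$ for convex bodies $K,E$ of the prescribed dimensions, whose roots are $\gamma_1,\dots,\gamma_r$. I expect the only delicate point to be exactly this bookkeeping — keeping the sign $(-1)^{r-i}$ and the reversal $i\mapsto r-i$ straight, and checking that the shortened range in ii) together with the degenerate cases (zero roots, reality of the symmetric functions) is genuinely equivalent to what the lemma asserts; there is no analytic difficulty beyond Vieta's formulas once Lemma \ref{l:charact_Steiner} is in hand.
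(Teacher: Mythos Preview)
Your proposal is correct and matches the paper's approach: the paper simply states that the corollary is ``an immediate consequence of Lemma~\ref{l:charact_Steiner}'' and gives no separate proof, and what you have written is exactly the Vieta-type translation of the coefficient conditions in Lemma~\ref{l:charact_Steiner} into conditions on the elementary symmetric functions of the roots, together with the bookkeeping for the index shift $i\mapsto r-i$ and the verification that the extra cases $i\geq r$ in Lemma~\ref{l:charact_Steiner}~ii) are trivially satisfied.
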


We conclude this section by three immediate applications  of Lemma
\ref{l:charact_Steiner}.
\begin{proposition}\label{prop:truncated_steiner}
All truncated binomial polynomials
$P_{j,k}^n(z)=\sum_{i=j}^k\binom{n}{i}z^i$, $0\leq j<k\leq n$, are Steiner
polynomials of convex bodies $K,E\in\K^n$ with $\dim K=n-j$, $\dim E=k$
and $\dim(K+E)=n$.
\end{proposition}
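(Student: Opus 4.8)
The plan is to verify conditions i) and ii) of Lemma~\ref{l:charact_Steiner} for the coefficient sequence of $P_{j,k}^n(z)$. Write $a_i=\binom{n}{i}$ for $j\le i\le k$ and $a_i=0$ otherwise. Condition i) is immediate with $r=k$ and $n-s=j$ (so $s=n-j$, $r=k$), since $\binom{n}{i}>0$ precisely for $0\le i\le n$, and here we have restricted to $j\le i\le k$; note $j<k$ guarantees the polynomial is genuinely of the required form with $\dim(K+E)=n$ (as $a_j,a_k>0$ forces no common vanishing). So the real content is condition ii): we must check
\[
c_{i,n}\,a_i^2\ge a_{i-1}a_{i+1}\quad\text{for }1\le i\le n-1,
\]
with $c_{i,n}=\binom{n}{i-1}\binom{n}{i+1}/\binom{n}{i}^2$.

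The key observation is that for the \emph{full} binomial sequence $a_i=\binom{n}{i}$ (all $0\le i\le n$), one has equality $c_{i,n}\binom{n}{i}^2=\binom{n}{i-1}\binom{n}{i+1}$ by the very definition of $c_{i,n}$. Truncating to $j\le i\le k$ only sets some terms to zero, which can only help: if $i$ is in the ``interior'' range $j<i<k$, then $a_{i-1},a_i,a_{i+1}$ are all the honest binomial coefficients and ii) holds with equality; if $i$ lies outside or on the boundary of $[j,k]$, then at least one of $a_{i-1},a_{i+1}$ vanishes, making the right-hand side $0$, while the left-hand side $c_{i,n}a_i^2\ge 0$ — so the inequality holds trivially. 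Thus ii) holds for every $i$, and Lemma~\ref{l:charact_Steiner} applies directly, yielding convex bodies $K,E\in\K^n$ with $\dim K=n-j$, $\dim E=k$, $\dim(K+E)=n$, and $\f{K}{E}(z)=P_{j,k}^n(z)$.

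There is essentially no obstacle here — the proposition is a clean corollary of the characterization lemma, and the only point requiring a moment's care is the bookkeeping of which index $i$ makes a truncated term vanish (the cases $i=j$ and $i=k$ themselves: at $i=j$ we have $a_{i-1}=a_{j-1}=0$, and at $i=k$ we have $a_{i+1}=a_{k+1}=0$, so both are in the ``trivial'' case). I would present the argument in two sentences: state the equality for the full binomial row, then observe truncation preserves the ultra-logconcavity inequality because each omitted term only decreases a right-hand side. One could alternatively give an explicit simplex construction mirroring the proof of Lemma~\ref{l:charact_Steiner} with all $\alpha_i=1$ for $j<i\le k$ and $\alpha_i=0$ otherwise, but invoking the lemma is cleaner and I would do that.
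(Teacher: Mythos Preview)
Your proof is correct and follows exactly the paper's approach: the proposition is stated there as an immediate application of Lemma~\ref{l:charact_Steiner}, and the verification of conditions i) and ii) for the truncated binomial row is precisely what you spell out (equality in the interior, trivial inequality at or outside the truncation endpoints). The only addition the paper makes is the explicit remark, drawn from the construction in the proof of Lemma~\ref{l:charact_Steiner}, that one may take $K=\conv\{0,\e_{j+1},\dots,\e_n\}$ and $E=\conv\{0,c\,\e_1,\dots,c\,\e_j,\e_{j+1},\dots,\e_k\}$ with $c=(n!)^{1/j}$.
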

Hence in the following we consider $P_{j,k}^n(z)$ as Steiner polynomials.
In fact, by the proof of Lemma \ref{l:charact_Steiner}, $P_{j,k}^n(z)$ can
be realized as the Steiner polynomial $\f{K}{E}(z)$ of the bodies
$K=\conv\{0,\e_{j+1},\dots,\e_n\}$ and $E=\conv\{0,c\,\e_1,\allowbreak
\dots,c\,\e_j,\e_{j+1},\dots,\e_k\}$ with $c=(n!)^{1/j}$.

Second consequence deals with the derivative and antiderivative of Steiner
polynomials.
\begin{proposition}
Let $\f{K}{E}(z)=\sum_{i=0}^n a_i\,z^i$ be the Steiner polynomial of two
convex bodies $K,E\in\K^n$, $\dim(K+E)=n$. Then both, its derivative as
well as its antiderivative
\begin{equation*}
\f{K}{E}'(z)=\sum_{i=0}^{n-1} (i+1)\,a_{i+1}\,z^i\; \text{ and }\;
\int\f{K}{E}(z)\,{\rm d}z=\sum_{i=1}^{n+1}\frac{a_{i-1}}{i}\,z^i
\end{equation*}
are Steiner polynomials of appropriate convex bodies in $\K^{n-1}$ and
$\K^{n+1}$, respectively.
\end{proposition}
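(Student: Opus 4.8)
The plan is to invoke the characterization in Lemma~\ref{l:charact_Steiner}: it then suffices to verify that the coefficient sequence of $\f{K}{E}'$ satisfies conditions i) and ii) of that lemma in dimension $n-1$, and that of $\int\f{K}{E}(z)\,\mathrm{d}z$ does so in dimension $n+1$. Writing $a_i=\binom{n}{i}\W_i(K;E)$, the support of $(a_i)$ is $\{n-s,\dots,r\}$ with $s=\dim K$, $r=\dim E$, and $s+r\geq n$ because $\dim(K+E)=n$. For the derivative I set $b_i=(i+1)a_{i+1}$ for $0\leq i\leq n-1$, and for the antiderivative $d_0=0$, $d_i=a_{i-1}/i$ for $1\leq i\leq n+1$; all of these are non-negative, as required by the lemma.

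First I would treat condition i), the support and dimension pattern. From $a_{i+1}>0 \iff n-s-1\leq i\leq r-1$ one reads off that the non-zero $b_i$ are exactly those with index in $\{\max(0,n-s-1),\dots,r-1\}$; this is the support attached to a degree-$(n-1)$ Steiner polynomial of bodies of dimensions $\dim K'=\min(s,n-1)$ and $\dim E'=r-1$ with $\dim(K'+E')=n-1$, and the consistency requirements ($0\leq\dim K',\dim E'\leq n-1$ and $\dim K'+\dim E'\geq n-1$) all follow from $s\leq n$, $r\leq n$ and $s+r\geq n$. The only exceptional case is a constant $\f{K}{E}$ ($r=0$, $s=n$), whose derivative is the zero polynomial --- the (trivial) Steiner polynomial of any pair in $\K^{n-1}$ with lower-dimensional sum. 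For the antiderivative, $a_{i-1}>0 \iff n-s+1\leq i\leq r+1$, so $d_0=0$ is consistent (since $n-s+1\geq 1$), and the support $\{n-s+1,\dots,r+1\}$ corresponds to bodies of dimensions $\dim K''=s$, $\dim E''=r+1$ with $\dim(K''+E'')=n+1$, again compatible with $s\leq n$, $r\leq n$, $s+r\geq n$.

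The heart of the matter is condition ii), the ultra-logconcavity, and the point is that the constants $c_{i,m}=\frac{i}{i+1}\frac{m-i}{m-i+1}$ transform exactly. For the derivative, $b_{i-1}b_{i+1}=i(i+2)\,a_i a_{i+2}$, and the ultra-logconcavity of $(a_i)$ at index $i+1$ gives $a_i a_{i+2}\leq c_{i+1,n}\,a_{i+1}^2$, so it suffices to check the identity $i(i+2)\,c_{i+1,n}=(i+1)^2\,c_{i,n-1}$ for $1\leq i\leq n-2$; substituting the formula for $c$, both sides equal $i(i+1)\frac{n-1-i}{n-i}$. Likewise, for the antiderivative $d_{i-1}d_{i+1}=\frac{a_{i-2}a_i}{(i-1)(i+1)}$ for $2\leq i\leq n$ (the case $i=1$ being trivial since $d_0=0$), the ultra-logconcavity of $(a_i)$ at index $i-1$ gives $a_{i-2}a_i\leq c_{i-1,n}\,a_{i-1}^2$, and it remains to verify $i^2\,c_{i-1,n}=(i-1)(i+1)\,c_{i,n+1}$, which again holds identically (both sides equal $i(i-1)\frac{n-i+1}{n-i+2}$). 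Hence $c_{i,n-1}b_i^2\geq b_{i-1}b_{i+1}$ and $c_{i,n+1}d_i^2\geq d_{i-1}d_{i+1}$, Lemma~\ref{l:charact_Steiner} applies, and both assertions follow.

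I do not expect a genuine obstacle: once one commits to Lemma~\ref{l:charact_Steiner} the problem collapses to the two elementary algebraic identities between the $c_{i,m}$ above together with the bookkeeping of the support; the only mildly annoying points are the degenerate cases ($\f{K}{E}$ constant, or $\dim K=0$, or $\dim E=0$), each dispatched by inspection. It may be worth recording, precisely because the constants transform with equality, that differentiation and integration send equality cases of the Aleksandrov--Fenchel-type inequalities \eqref{e:special_af2} to equality cases and conversely.
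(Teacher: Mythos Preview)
Your proof is correct and follows exactly the approach the paper intends: the proposition is listed there as an immediate application of Lemma~\ref{l:charact_Steiner} without a written proof, and you have supplied precisely the verification of conditions i) and ii) that the lemma requires, including the key identities $i(i+2)\,c_{i+1,n}=(i+1)^2\,c_{i,n-1}$ and $i^2\,c_{i-1,n}=(i-1)(i+1)\,c_{i,n+1}$. Your handling of the support bookkeeping and the degenerate case $r=0$ is also correct.
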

If $\dim K=n$, we may also add any constant term $c$ to the antiderivative
as long as $c\leq na_0^2/\bigl((n+1)a_1\bigr)$.

The last consequence regards Steiner polynomials with only real roots.
\begin{proposition}
For any given $n$ real numbers $\gamma_i\leq 0$, $i=1,\dots,n$, there
exist $K,E\in\K^n$ such that $\f{K}{E}(\gamma_i)=0$ for all $i=1,\dots,n$.
\end{proposition}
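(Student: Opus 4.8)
The plan is to produce the obvious polynomial with the prescribed roots and verify that it satisfies the hypotheses of Lemma~\ref{l:charact_Steiner}. First I would write $\gamma_i=-\delta_i$ with $\delta_i\geq 0$ and form
\[
p(z)=\prod_{i=1}^n(z-\gamma_i)=\prod_{i=1}^n(z+\delta_i)=\sum_{i=0}^n a_i z^i,\qquad a_i=\sy{n-i}{\delta_1,\dots,\delta_n}\geq 0,
\]
which has degree $n$, non-negative coefficients, and satisfies $p(\gamma_i)=0$ for all $i$. It then only remains to see that $p$ is a Steiner polynomial of some $K,E\in\K^n$ with $\dim(K+E)=n$, for which I would apply Lemma~\ref{l:charact_Steiner} (equivalently, Corollary~\ref{c:ultra-log_roots} directly to $\gamma_1,\dots,\gamma_n$).

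For condition~i) of Lemma~\ref{l:charact_Steiner}, let $j$ be the number of indices with $\delta_i=0$. Since $\sy{k}{\delta_1,\dots,\delta_n}$ is a sum of products of $k$ of the $\delta_i$, it is strictly positive exactly when $k\leq n-j$ and vanishes otherwise; hence $a_i>0$ precisely for $j\leq i\leq n$ and $a_i=0$ otherwise, which is the ``interval'' support required in i), with $r=n$ and $s=n-j$. For condition~ii), set $\W_i=a_i/\binom{n}{i}$; using $c_{i,n}\binom{n}{i}^2=\binom{n}{i-1}\binom{n}{i+1}$, the ultra-logconcavity inequality $c_{i,n}a_i^2\geq a_{i-1}a_{i+1}$ is equivalent to $\W_i^2\geq\W_{i-1}\W_{i+1}$. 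Now $\W_i=\sy{n-i}{\delta_1,\dots,\delta_n}/\binom{n}{n-i}$ is, after the substitution $k=n-i$, exactly the $k$-th normalized elementary symmetric mean of the numbers $\delta_1,\dots,\delta_n$, so the inequality $\W_i^2\geq\W_{i-1}\W_{i+1}$ is precisely Newton's inequality for these means (valid for arbitrary real arguments, in particular for non-negative ones). Thus $a_0,\dots,a_n$ is ultra-logconcave, and Lemma~\ref{l:charact_Steiner} yields $K,E\in\K^n$ with $\dim(K+E)=n$ and $\f{K}{E}=p$; in particular $\f{K}{E}(\gamma_i)=0$ for every $i$.

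I do not expect a genuine obstacle here: the single nontrivial input is the classical Newton inequality for elementary symmetric functions, and after invoking it Lemma~\ref{l:charact_Steiner} does all the work. The only mild bookkeeping is the case where some of the $\gamma_i$ vanish, which merely moves the left endpoint $j$ of the coefficient support and is already accommodated by condition~i). Note also that no scaling parameter is needed: the monic polynomial $\prod_{i=1}^n(z-\gamma_i)$ is itself a Steiner polynomial.
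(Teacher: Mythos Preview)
Your proof is correct and follows essentially the same route as the paper: invoke Newton's inequalities to show that the elementary symmetric functions of the (negatives of the) $\gamma_i$ form an ultra-logconcave sequence, and then apply Lemma~\ref{l:charact_Steiner}. The paper's argument is just a one-line citation of Newton's inequalities together with Lemma~\ref{l:charact_Steiner}; your version spells out the same reasoning in more detail, including the support condition~i) when some $\gamma_i$ vanish.
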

This is, for instance, due to the fact that the elementary symmetric
functions form an ultra-logconcave sequence (Newton inequalities, see e.g.
\cite{HLP}),
\[
\left(\frac{\sy{i}{\gamma_1,\dots,\gamma_n}}{\binom{n}{i}}\right)^2\geq
\frac{\sy{i-1}{\gamma_1,\dots,\gamma_n}}{\binom{n}{i-1}}\frac{\sy{i+1}{\gamma_1,\dots,\gamma_n}}{\binom{n}{i+1}},
\]
and so Lemma \ref{l:charact_Steiner} gives the result. In the case $n=2$
this means that given any pair $\gamma,\gamma'\in\r(2)$, we can find a
Steiner polynomial having these two roots. This property is, however, not
true in higher dimension if we also allow complex (non-real) numbers to be
involved. Indeed, in \cite[pp.~160-161]{HHC2} it is shown that if
$-a+b\im\in\r(3)$, then $-a+b\im,-a-b\im,-c$ are the roots of a Steiner
polynomial if and only if either $c\leq a-\sqrt{3}\,b$ or
$c\geq(a^2+b^2)/(a-\sqrt{3}\,b)$.

\section{On the boundary of the cones $\r(n)$}\label{s:boundary}

We start showing that all cones $\r(n)$  are closed.

\begin{proof}[Proof of Theorem \ref{t:R_n_closed}]
Let $\gamma\in\bd\r(n)$. Since we already know that the non-positive real
axis is always contained in $\r(n)$, we assume that $\gamma\not\in\R$. Let
$(\gamma_j)_{j\in\N}\varsubsetneq\inter\r(n)$ be a sequence of complex
numbers converging to $\gamma$. For each $j\in\N$, since
$\gamma_j\in\inter\r(n)$, there exists a pair of convex bodies
$(K_j,E_j)\in\K^n\times\K^n$, $\dim(K_j+E_j)=n$, such that
$\f{K_j}{E_j}(\gamma_j)=0$.

Notice that we can always choose $K_j,E_j$ such that $\vol(K_j+E_j)=1$.
Otherwise, since $\vol(K_j+E_j)>0$, it suffices to consider the new convex
bodies $K_j'=1/\vol(K_j+E_j)^{1/n}K_j$ and
$E_j'=1/\vol(K_j+E_j)^{1/n}E_j$, for which it clearly holds
$\f{K_j'}{E_j'}(\gamma_j)=\bigl(1/\vol(K_j+E_j)\bigl)\f{K_j}{E_j}(\gamma_j)=0$,
and moreover,
\[
\vol(K_j'+E_j')=\f{K_j'}{E_j'}(1)=\frac{1}{\vol(K_j+E_j)}\f{K_j}{E_j}(1)=1.
\]
Observe that since
$\vol(K_j+E_j)=\sum_{i=0}^n\binom{n}{i}\W_i(K_j;E_j)=1$, all
quermassintegrals $\W_i(K_j;E_j)\in[0,1]$, $i=0,\dots,n$, and not all of
them are zero. Then, denoting by $\W_{i,j}=\W_i(K_j;E_j)$, we can assure
that the bounded sequence of ($n+1$)-tuples of numbers
$(\W_{0,j},\dots,\W_{n,j})_{j\in\N}$ has a convergent subsequence to an
($n+1$)-tuple $(\W_0,\dots,\W_n)$, and without loss of generality we
assume that $(\W_{0,j},\dots,\W_{n,j})_{j\in\N}$ is the convergent
subsequence.

By continuity, the numbers $\W_0,\dots,\W_n$ also satisfy inequalities
\eqref{e:special_af2}, and thus the sequence
$\bigl\{a_i=\binom{n}{i}\W_i:i=0,\dots,n\bigr\}$ is ultra-logconcave.
Moreover,
\[
\sum_{i=0}^n\binom{n}{i}\W_i=\lim_{j\rightarrow\infty}\sum_{i=0}^n\binom{n}{i}\W_{i,j}
=\lim_{j\rightarrow\infty}\vol(K_j+E_j)=1,
\]
i.e., the polynomial
$\sum_{i=0}^n\binom{n}{i}\W_iz^i=\sum_{i=0}^na_iz^i\neq 0$. Therefore, the
property $a_i>0$ for all $n-s\leq i\leq r$ and $a_i=0$ otherwise, holds
for suitable $r,s\in\{1,\dots,n\}$. Then Lemma \ref{l:charact_Steiner}
ensures that $\sum_{i=0}^n\binom{n}{i}\W_iz^i$ is a Steiner polynomial of
two convex bodies $K,E\in\K^n$ with $\dim K=s$, $\dim E=r$. By continuity,
since $\f{K_j}{E_j}(\gamma_j)=0$ for all $j\in\N$ and the sequence of
complex numbers $(\gamma_j)_{j\in\N}$ converges to $\gamma$, we have
$\f{K}{E}(\gamma)=0$, i.e., $\gamma\in\r(n)$. This shows that the cone
$\r(n)$ is closed.
\end{proof}

Since $\r(n)$ is closed, we may ask which pairs of convex bodies or
Steiner polynomials determine the boundary $\bd\r(n)\backslash\R_{\leq
0}$. We recall (cf. Proposition~\ref{p:cone3}) that if $E\in\K^3_0$ is a
cap-body of a planar convex body $K$, then $(K,E)$ is a boundary-pair. We
also notice that if $E\in\K^4_0$ is a cap-body of $K$ with $\dim K=3$,
then the condition for the boundary in Proposition \ref{p:r(4)} is
satisfied, i.e., $(K,E)$ is also a boundary-pair in dimension $4$. However
this is not the case for $n\geq 5$: in general, if $K\in\K^n$ with $\dim
K=n-1$ and $E\in\K^n_0$ is a cap-body of $K$, then
$\vol(E)=\W_0(E;K)=\dots=\W_{n-1}(E;K)\neq 0$ (see \cite[proof of
Theorem~6.6.16, p.~368]{Sch}); so, since $\W_0(K;E)=0$ we get
\[
\f{K}{E}(z)=\sum_{i=1}^n\binom{n}{i}\W_i(K;E)z^i=\sum_{i=1}^n\binom{n}{i}\W_{n-i}(E;K)z^i
=\vol(E)P_{1,n}^n(z).
\]
Then it can be checked that all roots of the Steiner polynomial
$P_{1,5}^5(z)$ lie in the interior of the cone determined by the complex
number $-0.5000+0.8660\im$, which is a root of the Steiner polynomial
$P_{1,4}^5(z)$ (cf. Table~\ref{ta:computations}). Analogously for
dimensions $n=6,7,8,9$. Finally, it can be easily seen (cf. also
\cite[Corollary 3.1]{HHC2}) that all roots of $P_{1,n}^n(z)$ have
non-positive real part, and thus, because of the non-stability of the
Steiner polynomial for $n\geq 10$ (Proposition \ref{p:cones_10_11}), they
cannot determine the boundary.


\begin{remark}\label{r:computations}
Numerical computations suggest that for each $n$ and suitable $0<j<k\leq
n$, the Steiner polynomials
\begin{equation*}\label{e:binomial}
P_{j,k}^n(z)=\sum_{i=j}^k\binom{n}{i}z^i
\end{equation*}
have a root on the boundary $\bd\r(n)\backslash\R_{\leq 0}$ (cf.
Conjecture \ref{co:binomial}). Table \ref{ta:computations} lists, for
$n\leq 20$, the indices $j$ and $k$ of those Steiner polynomials
$P_{j,k}^n(z)$ having a root $\gamma$ of minimal angle $\alpha$ with the
positive real axis.
\end{remark}

\begin{table}[htb]
\def\arraystretch{1.1}
\begin{tabular}{llll}
$n=3$ & $j=1$, $k=3$ & $\gamma=-1.5000+0.8660\im$ & $\alpha=2.6179$\\
$n=4$ & $j=1$, $k=4$ & $\gamma=-1.0000+1.0000\im$ & $\alpha=2.3561$\\
$n=5$ & $j=1$, $k=4$ & $\gamma=-0.5000+0.8660\im$ & $\alpha=2.0943$\\
$n=6$ & $j=1$, $k=5$ & $\gamma=-0.3856+0.9226\im$ & $\alpha=1.9667$\\
$n=7$ & $j=2$, $k=6$ & $\gamma=-0.3249+1.2279\im$ & $\alpha=1.8294$\\
$n=8$ & $j=2$, $k=6$ & $\gamma=-0.1464+0.9892\im$ & $\alpha=1.7177$\\
$n=9$ & $j=2$, $k=7$ & $\gamma=-0.0698+0.9975\im$ & $\alpha=1.6406$\\
$n=10$ & $j=3$, $k=8$ & $\gamma=\hphantom{-}0.0158+1.1903\im$ & $\alpha=1.5574$\\
$n=11$ & $j=3$, $k=8$ & $\gamma=\hphantom{-}0.0854+0.9963\im$ & $\alpha=1.4852$\\
$n=12$ & $j=4$, $k=9$ & $\gamma=\hphantom{-}0.1533+1.1549\im$ & $\alpha=1.4388$\\
$n=13$ & $j=4$, $k=10$ & $\gamma=\hphantom{-}0.2127+1.1256\im$ & $\alpha=1.3840$\\
$n=14$ & $j=4$, $k=10$ & $\gamma=\hphantom{-}0.2400+0.9707\im$ & $\alpha=1.3284$\\
$n=15$ & $j=5$, $k=11$ & $\gamma=\hphantom{-}0.3139+1.0864\im$ & $\alpha=1.2895$\\
$n=16$ & $j=5$, $k=11$ & $\gamma=\hphantom{-}0.3121+0.9500\im$ & $\alpha=1.2533$\\
$n=17$ & $j=5$, $k=12$ & $\gamma=\hphantom{-}0.3452+0.9384\im$ & $\alpha=1.2182$\\
$n=18$ & $j=6$, $k=13$ & $\gamma=\hphantom{-}0.4186+1.0258\im$ & $\alpha=1.1833$\\
$n=19$ & $j=6$, $k=13$ & $\gamma=\hphantom{-}0.4076+0.9131\im$ & $\alpha=1.1509$\\
$n=20$ & $j=7$, $k=14$ & $\gamma=\hphantom{-}0.4727+0.9917\im$ & $\alpha=1.1259$\\
\end{tabular}

\medskip \caption{Numerical computations for $\bd\r(n)\backslash\R_{\leq 0}$, $n\leq
20$.}\label{ta:computations}
\end{table}

Of particular interest is  also the entry for dimension $n=10$ in Table
\ref{ta:computations}. Here we have for the first time a root $\gamma$
with positive real part and so $P_{3,8}^{10}(z)$ is a non-weakly stable
Steiner polynomial. Together with known results this settles the question
when Steiner polynomials are (weakly) stable.
\begin{proof}[Proof of Proposition \ref{p:cones_10_11}]
The (weak) stability of the Steiner polynomial was shown for all
dimensions $n\leq 9$ in \cite[Proposition~1.1]{HHC2}, as well as its
non-stability when $n\geq 12$ \cite[Remark~3.2]{HHC}. Thus just the cases
$n=10,11$ remain to be considered, but Table \ref{ta:computations}
provides two non-weakly stable Steiner polynomials in these dimensions.
\end{proof}

\section{On the monotonicity of the cones $\r(n)$}\label{s:monotonicity}

First we observe that it is easy to see that $\r(n)\subseteq\r(n+1)$. To
this end, let $\gamma\in\r(n)$ and $K,E\in\K^n$  such that
$\f{K}{E}(\gamma)=0$. Identifying $K$ and $E$ with their canonical
embedding in the hyperplane $\{\e_{n+1}\}^{\bot}\varsubsetneq\R^{n+1}$,
let $E'=E\times\conv\{0,\e_{n+1}\}$ be the prism over $E$ of height 1 in
the direction $\e_{n+1}$. Then we observe that
\begin{equation*}
\vol(K+\lambda\,E^\prime)=\vol\bigl((K+\lambda
E)\times\lambda\conv\{0,\e_{n+1}\}\bigr)=\lambda\,\vol_n(K+\lambda E),
\end{equation*}
i.e., $\f{K}{E'}(z)=z\f{K}{E}(z)$ and thus $\f{K}{E'}(\gamma)=0$. Hence
$\gamma\in\r(n+1)$, which shows that $\r(n)\subseteq\r(n+1)$. Theorem
\ref{t:R_n_inclusion} states that this inclusion is strict.

\begin{proof}[Proof of Theorem \ref{t:R_n_inclusion}]
Let $\gamma_1\in\bd\r(n)\backslash\R_{\leq 0}$. By Theorem
\ref{t:R_n_closed} $\r(n)$ is closed, and hence $\gamma_1$ is a root of
some Steiner polynomial $\f{K}{E}(z)$ of degree $r\leq n$, with
$K,E\in\K^n$, $\dim E=r$, $\dim K=s$, $\dim(K+E)=n$. Let
$\gamma_2,\dots,\gamma_r$ be the remaining roots of the polynomial, where
$\gamma_2=\overline{\gamma}_1$ is the complex conjugate of $\gamma_1$. We
may assume that $\gamma_1,\dots,\gamma_{r+s-n}\neq 0$ and
$\gamma_{r+s-n+1}=\dots=\gamma_r=0$. So, $0$ is (exactly) an ($n-s$)-fold
root.

In the following we will show that $\gamma_1$ lies in the interior of
$\r(n+1)$, i.e., we will prove the existence of $\varepsilon_0>0$ such
that for any $z\in\C$ with modulus $|z|=1$, the $r+1$ complex numbers
$\rho_1=\gamma_1+\varepsilon_0z,\rho_2=\gamma_2+\varepsilon_0\overline{z},\gamma_3,\dots,\gamma_r,0$
are the roots of a Steiner polynomial $\f{K'}{E'}(z)$ of degree $r+1$ with
$K',E'\in\K^{n+1}$, $\dim E'=r+1$, $\dim K'=s$ and $\dim(K'+E')=n+1$.
According to Corollary~\ref{c:ultra-log_roots} this is equivalent to show
that
\begin{equation*}\label{eq:condition_r+1}
\begin{split}
\text{\rm I)} & \quad
(-1)^i\sy{i}{\rho_1,\rho_2,\gamma_3\dots,\gamma_r,0}>0,\quad 0\leq i\leq r+s-n, \\
 & \hspace*{1.3cm}\sy{i}{\rho_1,\rho_2,\gamma_3\dots,\gamma_r,0}=0,\quad r+s-n+1\leq i\leq r+1,\\
\text{\rm II)} & \quad c_{r+1-i,n+1}\,\sy{i}{\rho_1,\rho_2,\gamma_3\dots,\gamma_r,0}^2\\
 & \hspace*{1.62cm}\geq\sy{i-1}{\rho_1,\rho_2,\gamma_3\dots,\gamma_r,0}\sy{i+1}{\rho_1,\rho_2,\gamma_3\dots,\gamma_r,0},\;
1\leq i\leq r.
\end{split}
\end{equation*}
To this end we note that, for $0\leq i\leq r$ and for any $\varepsilon>0$,
\begin{equation}\label{eq:extension}
\begin{split}
\sy{i}{\gamma_1\!+\varepsilon z,\gamma_2+\varepsilon\overline{z},
    \gamma_3,\dots,\gamma_r,0} & =\sy{i}{\gamma_1\!+\varepsilon z,\gamma_2+\varepsilon\overline{z},
    \gamma_3,\dots,\gamma_r},\\
\sy{r+1}{\gamma_1\!+\varepsilon z,\gamma_2+\varepsilon\overline{z},
    \gamma_3,\dots,\gamma_r,0} & =0.
\end{split}
\end{equation}
Since $n+1-s$ of the $r+1$ numbers $\gamma_1+\varepsilon
z,\gamma_2+\varepsilon\overline{z},\gamma_3,\dots,\gamma_r,0$ are zero, we
also have that, for any $\varepsilon>0$,
\begin{equation}\label{eq:zeronew}
\sy{i}{\gamma_1+\varepsilon z,
\gamma_2+\varepsilon\overline{z},\gamma_3,\dots,\gamma_r,0}=0\quad \text{
for }\; i\geq r+s-n+1.
\end{equation}
Obviously, the numbers $\gamma_1+\varepsilon z,\gamma_2+ \varepsilon
\overline{z},\gamma_3,\dots,\gamma_r,0$ are roots of a polynomial with
real coefficients. Hence, in view of \eqref{eq:extension},
\eqref{eq:condition} i) and the continuity of polynomials, there exists
$\varepsilon_1>0$ such that for any $0<\varepsilon\leq\varepsilon_1$
\begin{equation*}
\begin{split}
(-1)^i\, & \sy{i}{\gamma_1+\varepsilon z,\gamma_2+\varepsilon\overline{z},\gamma_3,\dots,\gamma_r,0}\\
 & =(-1)^i\,\sy{i}{\gamma_1+\varepsilon z,\gamma_2+\varepsilon\overline{z},\gamma_3,\dots,\gamma_r}>0,
    \quad 0\leq i\leq r+s-n.
\end{split}
\end{equation*}
So, with \eqref{eq:zeronew} both conditions in I) are satisfied for
$\varepsilon\leq\varepsilon_1$.

Relation \eqref{eq:zeronew} also implies that the inequalities in II) are
certainly satisfied for $r+s-n\leq i\leq r$. So it remains to consider
$1\leq i<r+s-n$. By \eqref{eq:condition}~ii) we know that
\begin{equation*}
c_{r-i,n}\,\sy{i}{\gamma_1,\dots,\gamma_r}^2\geq\sy{i-1}{\gamma_1,\dots,\gamma_r}\sy{i+1}{\gamma_1,\dots,\gamma_r},
\end{equation*}
and since $c_{r+1-i,n+1}>c_{r-i,n}$ for all $1\leq i\leq r$ and
$\sy{i}{\gamma_1,\dots,\gamma_r}^2>0$ for $0\leq i\leq r+s-n$
(cf.~\eqref{eq:condition}~i)), we get that
\begin{equation*}
c_{r+1-i,n+1}\,\sy{i}{\gamma_1,\dots,\gamma_r}^2>\sy{i-1}{\gamma_1,\dots,\gamma_r}\sy{i+1}{\gamma_1,\dots,\gamma_r}
\end{equation*}
for all $1\leq i<r+s-n$. Hence, as before, by continuity of polynomials,
there exists $\varepsilon_2>0$ such that
\begin{equation*}
\begin{split}
c_{r+1-i,n+1}\, & \sy{i}{\gamma_1+\varepsilon z,\gamma_2+\varepsilon\overline{z},\gamma_3,\dots,\gamma_r}^2\\
 >\, & \sy{i-1}{\gamma_1+\varepsilon z,\gamma_2+\varepsilon\overline{z},\gamma_3,\dots,\gamma_r}
    \sy{i+1}{\gamma_1+\varepsilon z,\gamma_2+\varepsilon\overline{z},\gamma_3,\dots,\gamma_r}
\end{split}
\end{equation*}
for all $0<\varepsilon\leq\varepsilon_2$ and $1\leq i<r+s-n$. On account
of \eqref{eq:extension} we obtain II) for $\varepsilon\leq\varepsilon_2$,
and the assertion follows with
$\varepsilon_0=\min\{\varepsilon_1,\varepsilon_2\}$.
\end{proof}

As a corollary of the above proof we obtain a necessary condition
for convex bodies forming a boundary-pair.

\begin{proof}[Proof of Corollary \ref{c:A-F_ineq}]
For $\gamma\in\bd\r(n)\backslash\R_{\leq 0}$, $n\geq3$, let $K,E\in\K^n$
be such that $\f{K}{E}(\gamma)=0$, and let
$\overline{\gamma},\gamma_3,\dots,\gamma_n$ be the remaining roots of
$\f{K}{E}(z)$.

If we assume that $K,E$ are not extremal sets in any Aleksandrov-Fenchel
inequality, i.e., if we have strict inequalities in \eqref{e:special_af2},
then for all $1\leq i\leq n-1$ we get by Corollary~\ref{c:ultra-log_roots}
\[
c_{r-i,n}\,\sy{i}{\gamma,\overline{\gamma},\gamma_3,\dots,\gamma_n}^2>
\sy{i-1}{\gamma,\overline{\gamma},\gamma_3,\dots,\gamma_n}\sy{i+1}{\gamma,\overline{\gamma},\gamma_3,\dots,\gamma_n}.
\]
By the continuity of the elementary symmetric functions, for
$\varepsilon>0$ small enough, the numbers $\gamma+\varepsilon
z,\overline{\gamma}+\varepsilon\overline{z}, \gamma_3,\dots,\gamma_n$ are
roots of a polynomial with real coefficients, satisfying also conditions
i) and ii) of Corollary \ref{c:ultra-log_roots} for any $z\in\C$ with
$|z|=1$. This implies that $\{\gamma+\varepsilon
z:|z|=1\}\varsubsetneq\r(n)$, contradicting that
$\gamma\in\bd\r(n)\backslash\R_{\leq 0}$.
\end{proof}

We conclude this section studying the behavior of the cones for high
dimensions, i.e., we prove Theorem \ref{t:n->infty}.

\begin{proof}[Proof of Theorem \ref{t:n->infty}]
The proof is based on known results on the distribution of the roots of
the truncated binomial polynomials
$P_{0,k}^n(z)=\sum_{i=0}^k\binom{n}{i}z^i$, $0<k\leq n$, which are also
Steiner polynomials (cf.~Proposition \ref{prop:truncated_steiner}).

Let $\{k_n:n\in\N\}$ be any sequence of positive integer numbers such that
$\alpha=\lim_{n\to\infty}k_n/n\in(0,1)$. By \cite[Remark 1]{O04} we have
that the set of accumulation points of
$\bigcup_{n=1}^{\infty}\bigl\{z\in\C:P_{0,{k_n}}^n(z)=0\bigr\}$ coincides
with the set
\begin{equation*}
\left\{z\in\C:|z|=\alpha\,(1-\alpha)^{1/\alpha-1}\,|1+z|^{1/\alpha} \text{
and }
\left|z-\frac{\alpha^2}{1-\alpha^2}\right|\leq\frac{\alpha}{1-\alpha^2}\right\}.
\end{equation*}
Hence, taking $k_n=\lfloor n/2\rfloor$, it can be checked that $1$ is
contained in the above set of accumulation points, and so we know that
there exists a sequence $\gamma_n\in\C^+\setminus\R_{>0}$, $n\in\N$, such
that for each $n\in\N$ there is $m_n\in\N$ with
\begin{equation}\label{eq:roots_binom}
\lim_{n\to\infty}\gamma_n=1\quad\text{ and }\quad P_{0,\lfloor
m_n/2\rfloor}^{m_n}(\gamma_n)=0.
\end{equation}

Now let $\gamma\in\C^+\setminus\R_{>0}$. By the choice of the sequence
$\gamma_n$ (cf.~\eqref{eq:roots_binom}) we can find an $n_{\gamma}\in \N$
such that $\gamma$ is contained in the interior of the cone generated by
the negative $x$-axis and $\gamma_{n_{\gamma}}$, which in particular
implies, by the convexity of the cone $\r(n_{\gamma})$ (cf.~Theorem
\ref{t:cone}), that $\gamma\in\r(n_{\gamma})$. By Theorem
\ref{t:R_n_inclusion} we get the desired statement.
\end{proof}

\section{The $4$-dimensional cone}\label{s:apendix}

We conclude the paper by characterizing the cone of roots of $4$-dimensional
Steiner polynomials, for which it suffices to determine its boundary
(cf.~Theorem~\ref{t:cone}).

\begin{proof}[Proof of Proposition \ref{p:r(4)}]
First we notice that $\{x+y\im\in\C^+:x+y\leq 0\}\subseteq\r(4)$. Indeed,
since $\r(4)$ is a convex cone containing $\R_{\leq 0}$
(Theorem~\ref{t:cone}), it suffices to prove that $-1+\im\in\r(4)$, which
follows from the fact that $P_{1,4}^4(-1+\im)=0$ and
Proposition~\ref{prop:truncated_steiner}.

Next we determine conditions verified by a pair of convex bodies whose
Steiner polynomial has $-1+\im$ as a root. We have to distinguish two
cases. If $E\in\K^4_0$ then such a polynomial has to take the form
\[
\f{K}{E}(z)=\sum_{i=0}^4\binom{4}{i}\W_i(K;E)z^i=\W_4(K;E)(z^2+2z+2)(z^2+cz+d),
\]
for certain $c,d\geq 0$ because it is weakly stable (cf. Proposition
\ref{p:cones_10_11}). Then we have the identities
\begin{equation}\label{e:W_i_cd}
\begin{split}
2+c=4\frac{\W_3(K;E)}{\W_4(K;E)}, & \quad 2(c+1)+d=6\frac{\W_2(K;E)}{\W_4(K;E)},\\
2(c+d)=4\frac{\W_1(K;E)}{\W_4(K;E)}, & \quad
2d=\frac{\W_0(K;E)}{\W_4(K;E)}.
\end{split}
\end{equation}
Inequalities \eqref{e:special_af2} for $i=3$, $i=2$ and $i=1$ yield, in
terms of $c,d$, respectively,
\begin{equation*}
\begin{split}
3c^2-4c-8d-4 & \geq 0,\\
c^2+(d+2)c-2(d^2-5d+4) & \leq 0,\\
3c^2-2dc-d^2-8d & \geq 0,
\end{split}
\end{equation*}
which, since $c,d\geq 0$, are equivalent to
\begin{equation*}
\begin{split}
c & \geq\frac{2}{3}\left(1+\sqrt{2}\sqrt{2+3d}\right),\\
c & \leq d-4\;\text{ if }\,d\geq 2\quad\text{ and }\quad c\leq 2(1-d)\;\text{ if }\,d\leq 2,\\
c & \geq\frac{1}{3}\left(d+2\sqrt{d(d+6)}\right),
\end{split}
\end{equation*}
respectively. A straightforward computation allows to conclude that the
three above inequalities hold simultaneously if and only if $d=0$ and
$c=2$. Then,
$\f{K}{E}(z)=\W_4(K;E)(z^4+4z^3+6z^2+4z)=\W_4(K;E)P_{1,4}^4(z)$. In
particular, $\W_0(K;E)=0$ (cf. \eqref{e:W_i_cd}) and, in view of
$\W_1(K;E)>0$, this shows that $\dim K=3$ and moreover, it holds
$\W_1(K;E)=\W_2(K;E)=\W_3(K;E)=\W_4(K;E)$.

Now we suppose $\dim E<4$. Then the polynomial has to take the form
\[
\f{K}{E}(z)=(z^2+2z+2)(cz+d)=cz^3+(d+2c)z^2+2(c+d)z+2d,
\]
for certain $c,d\geq 0$ and applying Lemma \ref{l:charact_Steiner} it is
easy to check that it is a Steiner polynomial if and only if $c=d$. Notice
that it cannot be $c=d=0$. Hence $\f{K}{E}(z)=cz^3+3cz^2+4cz+2c$, implying
that
\[
\frac{1}{2}\W_0(K;E)=\W_1(K;E)=2\W_2(K;E)=4\W_3(K;E)=c\neq 0
\]
and, in particular, that $\dim K=4$. In both cases we get the required
equalities $\W_i(K;E)^2=\W_{i-1}(K;E)\W_{i+1}(K;E)$, for $i=2,3$.

Finally we prove that $\r(4)=\{x+y\im\in\C^+:x+y\leq 0\}$. Thus we assume
that $\gamma=-1+(1+\varepsilon)\im\in\r(4)$ for $\varepsilon>0$, i.e.,
that there exist $K,E\in\K^n$ such that $\f{K}{E}(\gamma)=0$, and we will
get a contradiction. Then (see \cite[Lemma~2.1]{HHC2})
$\gamma-\varepsilon$ is a root of $\f{K+\varepsilon E}{E}(z)$. But since
$\gamma-\varepsilon=-(1+\varepsilon)+(1+\varepsilon)\im$, the previous
property implies that either $\dim(K+\varepsilon E)=3$ with $E\in\K^4_0$,
which is clearly not possible, or $\dim E=3$ and $\vol(K+\varepsilon
E)=\W_i(K+\varepsilon E;2E)\neq 0$, $i=1,2,3$, which also leads to a
contradiction. Indeed, if
\[
\W_0(K+\varepsilon E;2E)=\W_1(K+\varepsilon E;2E)= \W_2(K+\varepsilon
E;2E)=\W_3(K+\varepsilon E;2E),
\]
we find by the Steiner formulae for quermassintegrals (see \cite[(5.1.27)
and p.~212]{Sch}) that
\[
\W_2(K;E)=2(1-\varepsilon)\W_3(K;E)\;\text{ and }\;
\W_1(K;E)=(4+3\varepsilon^2-6\varepsilon)\W_3(K;E).
\]
Notice that this implies $\varepsilon<1$. However, substitution of the
above expressions in inequality \eqref{e:special_af2} for $i=2$ leads to
$\varepsilon\geq 2$, a contradiction.
\end{proof}

\end{document}